\algrenewcommand\algorithmicrequire{\textbf{Precondition:}}  
\algrenewcommand\algorithmicensure{\textbf{Postcondition:}}
\newtheorem{theorem}{Theorem}
\newtheorem{lemma}{Lemma}
\newtheorem{assumption}{Assumption}
\newtheorem*{assumption*}{Assumption}
\theoremstyle{definition}
\newtheorem*{example*}{Example}
\theoremstyle{remark}
\newtheorem*{remark*}{Remark}
\newcommand{\E}{\mathbb{E}}
\newcommand{\Pro}{\mathbb{P}}
\newcommand{\indep}{\perp \!\!\! \perp}
\newcommand{\N}{\mathcal{N}}
\newcommand{\ip}[1]{\left\langle #1 \right\rangle} 
\newcommand{\avg}[1]{ #1 _{avg}} 
\newcommand{\R}{\mathbb{R}}
\newcommand{\Z}{\mathbb{Z}}
\newcommand{\aV}{\overline{V}}
\DeclareMathOperator{\spanof}{span}
\DeclareMathOperator{\grad}{grad}
\DeclareMathOperator{\diverg}{div}
\DeclareMathOperator{\cor}{Cor}
\newcommand{\mdd}{\,\middle|\,}
\newcommand{\cT}{\mathcal{T}}
\setlist[enumerate]{leftmargin=15pt}
\setlist[itemize]{leftmargin=15pt}
\newcommand{\re}[1]{\textcolor{black}{#1}}
\newcommand{\ree}[1]{\textcolor{black}{#1}}
\newcommand{\nax}[1]{\textcolor{black}{#1}}
\newcommand\reout[1]{}
\newcommand\reeout[1]{}
\newcommand\naxout[1]{}
\title[linear response by nonintrusive shadowing] {
Approximating linear response by nonintrusive shadowing algorithms 
}
\author{Angxiu Ni}
\address{Beijing International Center for Mathematical Research, Peking University, Beijing 100871, P. R. China}
\email{niangxiu@gmail.com}
\urladdr{\url{https://bicmr.pku.edu.cn/~niangxiu/}}
\date{\today}
\begin{document}

\begin{abstract}
Nonintrusive shadowing algorithms efficiently compute 
$v$, the difference between shadowing trajectories,
then use $v$ to compute derivatives 
of averaged objectives of chaos with respect to parameters of the dynamical system. 
However, previous proofs of shadowing methods 
wrongly assume that shadowing trajectories are representative. 
In contrast, the linear response formula is proved rigorously, 
but is more difficult to compute.

We prove that $v$ gives only a part, called the shadowing contribution, of the linear response;
hence, the other part, the unstable contribution, is the systematic error of shadowing methods.
For systems with a small ratio of unstable dimensions,
with some further statistical assumptions,
we show that the unstable contribution is small.
We also briefly describe an algorithm for the unstable contribution,
which is simpler to derive but less efficient than the fast linear response algorithm.

Moreover, we prove the convergence of the nonintrusive shadowing algorithm,
the fastest shadowing algorithm,
to $v$ and to the shadowing contribution.
\end{abstract}

\maketitle

\section{Introduction}

In chaotic systems, while instantaneous snapshots seem random and unpredictable,
the averaged behavior is deterministic,
and can be predicted using the parameters of the system.
This means that the averaged behavior of chaos,
measured by the average of an instantaneous objective function,
\re{typically} varies smoothly to the parameters of the system, 
and the derivative is well-defined.
This derivative is fundamental to analytical and numerical tools widely used in many disciplines, 
such as gradient-based optimizations and causal inferences.
Two of the major competitors for numerical differentiation of chaos
are the linear response formula and \re{the} shadowing method.

The linear response formula gives derivatives of averaged objective in hyperbolic systems, 
which is typically used as a model for general chaotic systems
\cite{Ruelle_diff_maps,Ruelle_diff_maps_erratum,Ruelle_diff_flow,Jiang2012}.
When the dynamical system has certain kind of hyperbolicity, say uniform hyperbolicity, 
the system is guaranteed to have structural stability under perturbations,
and we can prove that the linear response formula gives the correct derivative.
It should be noted that linear response fails for certain systems \cite{Baladi2007}.
In particular, the derivative may not even exist
for systems going through radical structural changes, such as bifurcations.

In terms of computations, the original linear response formula 
can be directly implemented in an ensemble approach or an operator-based approach
\cite{Lea2000,eyink2004ruelle,lucarini_linear_response_climate,
lucarini_linear_response_climate2,Bahsoun2018,Crimmins2020,Galatolo2014,Galatolo2014a}.
These algorithms converge slowly,
due to averaging out an exponentially growing integrand \cite{Chandramoorthy_ensemble_adjoint},
or inefficiency in approximating singular measures via isotropic finite elements in higher dimensions \cite{TrsfOprt}. 
On the other hand, via integration-by-parts on the unstable manifold,
we can get an alternative linear response formula with \ree{a} much smaller integrand,
which involves \re{the} divergence on unstable manifolds \cite{Gallavotti1996,Ruelle_diff_maps_erratum}.
The unstable divergence is very difficult to compute, 
since the directional derivatives are typically infinite.
Various approximations were introduced for computing the unstable divergence,
such as the blended response algorithm \cite{abramov2007blended,Abramov2008}.
Blended response is more efficient than 
the original linear response formula, yet still quite expensive,
and so far has been demonstrated only on systems with less than 100 dimensions.
The error analysis for blended response was previously missing,
and our error bound for shadowing will also provide a bound for the blended response.

Shadowing methods, starting from the theoretical advancement made by Anosov, Bowen, and Pilyugin
\cite{anosov_shadow,Bowen_shadowing,Pilyugin_shadow_linear_formula},
were used for numerical differentiation of chaos
\cite{wang2014convergence,Shimizu2018,Gunther2017,Blonigan_2017_adjoint_NILSS,Shawki2019,Lasagna2019,Blonigan_MSS}.
Shadowing methods first compute the shadowing direction, $v$,
which is the difference between shadowing trajectories;
then shadowing methods use $v$ to compute the derivative.
The shortcoming of the shadowing method is that it makes the strong assumption that
shadowing trajectories are representative of the long-time-averaged statistic of the perturbed system.
This is not true in general, and shadowing methods can fail for simple systems 
such as the 1-dimensional expanding circle \cite{BloniganPhdThesis}.
There were some very insightful discussions on the systematic error 
of shadowing methods by Blonigan \cite{Blonigan2014a,BloniganPhdThesis},
whose main difference from our work is 
the missing of a correct formula to which we can compare.

Hence, it is of interest to rebuild the theoretical foundation of shadowing methods
by comparing it with the linear response formula.
As we shall see in this paper, the result of shadowing methods gives a part of the correct derivative,
which we call the shadowing contribution of the linear response.
Moreover, we show that the shadowing shadowing contribution
is a good approximation of the entire linear response for some interesting cases,
such as high-dimensional systems with low-dimensional attractors.
This partially explains the success of shadowing in contexts such as fluid mechanics.

The computational efficiency and ease of implementation of shadowing methods
were significantly improved by a `nonintrusive' formulation.
`Nonintrusive' means that it uses only $m$-many solutions of the most basic equation, 
where $m$ is the unstable dimension, and the most basic equation in this case is the tangent equation. 
This gives the nonintrusive shadowing algorithm \cite{Ni_NILSS_JCP,Ni_fdNILSS}.
Continuous-time and adjoint versions of nonintrusive shadowing algorithms
have also been developed \cite{Ni_adjoint_shadowing,Ni_nilsas,Blonigan_2017_adjoint_NILSS}.
It is so far the only algorithm that has been demonstrated in 
very high dimensional problems,
such as a computational fluid problem with $4\times10^6$ dimensions \cite{Ni_CLV_cylinder}.
The efficiency improvement is because the nonintrusive formulation 
reduces the computation to the unstable subspace.

It is hence of interest to ask how much error is caused by the nonintrusive reduction.
The \ree{latter} part of this paper shows that 
nonintrusive shadowing is accurate for computing $v$.
More specifically, it has the same convergence-to-zero speed 
\re{as} previous shadowing methods.
Together with the first part of the paper, 
we give an error analysis of approximating \ree{the} linear response by nonintrusive shadowing.

Moreover, this paper is the first step towards the fast linear response algorithm.
This paper shows that the linear response can be decomposed into 
the shadowing contribution and the unstable contribution,
and that the shadowing contribution can be computed by nonintrusive shadowing.
Then, in another paper, we give a fast algorithm for computing the unstable contribution,
via a `fast' characterization by second-order tangent equations,
whose second derivative is taken in a modified shadowing direction \cite{flr}.
The fast linear response algorithm seems to be the fastest accurate algorithm
for the linear response of chaos.
Its derivation is quite complicated,
but the procedure list is not too much more complicated:
the main extra procedure, compared to nonintrusive shadowing,
is solving $m$-many second-order tangent equations,
which are first-order tangent equations with a second-order inhomogeneous term.
The fast linear response algorithm uses nonintrusive shadowing twice,
once for computing the shadowing contribution, 
once for the modified shadowing direction in the unstable contribution.
It can be somewhat surprising that nonintrusive shadowing is also important 
for efficiently computing the unstable contribution.

\re{
With the development of the fast linear response algorithm,
it is of even more interest to analyze the error of nonintrusive shadowing,
which is the main result of this paper.
It is also of interest to compute the unstable contribution
with simpler derivations, which typically means easier generalizations,
such as to continuous-time and adjoint versions.
However, here the flip side of a simple derivation is low efficiency or large error.
Such an algorithm is briefly described in this paper.
}

This paper is organized as follows.
First, we review the shadowing method and the linear response formula for discrete systems.
Then we prove that the shadowing method gives only a part of the linear response,
which we call the shadowing contribution.
Moreover, with two statistical assumptions,
we show that remaining part, the unstable contribution, of the linear response,
is positively related to the ratio of unstable dimension to the dimension of the system.
The two assumptions are (1) fast decay of correlations, and 
(2) both the gradient of the objective function and the perturbation of the dynamical system 
are not particularly aligned with the unstable subspace.
We also explain how to compute part of the unstable contribution by a simple derivation.
Finally, we prove the convergence of the nonintrusive shadowing algorithm to 
the shadowing direction, $v$, and to the shadowing contribution.

\section{Preparations}

\subsection{Hyperbolic dynamical systems}
\hfill\vspace{0.1in}

Consider an autonomous system with the governing equation:
\begin{equation} \label{e:primal_system_discrete}
  u_{k+1} = f(u_k, \gamma), \quad k\ge 0\,.
\end{equation}
Here $f$ is a $C^\infty$ diffeomorphism in $u$, state of the dynamical system,
where $u\in \R^M$; $\gamma\in\R$ is the parameter.
We consider only the case where the phase space is Euclidean, 
for the convenience of posing a statistical model later on,
which is used to quantitatively bound the error.
We may as well extend our results to chaos on Riemannian manifolds.
Also notice that we typically use $n, k, m$ to label steps,
and $i, j$ to label directions in the phase space.

The objective, $\avg{\Phi}$, is a long-time-averaged quantity
which converges to the same value for almost all initial conditions,
\begin{equation} \label{e:average_J_discrete}
  \avg{\Phi}= \lim\limits_{K\rightarrow\infty} \frac{1}{K} \sum_{k=0}^{K-1} \Phi(u_k) ,\, \quad a.e.
\end{equation}
Here $\Phi$ is a smooth function representing the instantaneous objective.
The goal is to perform sensitivity analysis, that is,
to compute the derivative $\delta \avg{\Phi}$, where
\[ \begin{split}
  \delta (\cdot) :=  \partial (\cdot) /\partial \gamma
\end{split} \]
may as well be thought of as small perturbations caused by changing $\gamma$.
We assume that $\Phi$ is fixed as $\gamma$ varies;
if not so, we only need to add the average of $\delta \Phi$ to the linear response.

To compute the derivative of the averaged objective, 
we first investigate how perturbing the parameter would affect individual trajectories.
Differentiate equation~\eqref{e:primal_system_discrete} with respect to $\gamma$, 
define $v_k:=\delta u_k$, 
it satisfies the inhomogeneous tangent equation:
\begin{equation}\label{e:inhomo_tangent_diffeo}
  v_{k+1} = f_{*} v_k + X_{k+1}\,.
\end{equation}
where $X:=\delta f \circ f^{-1}$ is a smooth vector field,
and $X_{k+1}=\delta f(u_k)$ is a column vector.
Here $f_*$ is the pushforward operator on vectors.
In this paper, $f_*$ is the pushforward operator,
which applies on vectors or measures.
In $\R^M$, applying $f_*$ on vectors can be represented by multiplying by the Jacobian matrix,
$\partial f/\partial u$;
on the other hand, applying $f_*$ on measures is represented by multiplying the density function
with $\det(\partial f/\partial u)^{-1}$.
The initial condition $v_0$ is yet to be determined, 
since there is some freedom to choose $u_0$ without affecting the objective.

A homogeneous tangent solution, $\{w_k\}_{k\in \Z}$, 
where $w_k$ is a vector at $u_k$,
is the solution of the homogeneous tangent equation,
\begin{equation}\label{e:homo_tangent_diffeo}
  w_{k+1} = f_{*} w_k \,.
\end{equation}
This equation governs \ree{a} perturbation on a trajectory
caused by perturbing the initial condition;
unlike the inhomogeneous version, here $\gamma$ is fixed.

This paper assumes uniform hyperbolicity, that is, for every $u$ on a compact invariant set $\cT$,
there is a splitting of the tangent space at $u$, $\R^M(u)=V^+(u) \bigoplus V^-(u)$, 
where $V^+$ is the unstable subspace of dimension $m$, and $V^-$ the stable subspace.
Moreover, there is a constant $C_1\ge 1$ and $\lambda\in (0,1)$ such that,
\begin{equation} \begin{split}\label{e:tangent decay}
  \| f_*^k w \| \le C_1 \lambda^{-k}\| w \|,\; \textnormal{ for }\;
  k\le 0, w \in V^+, \\
  \| f_*^k w \| \le C_1 \lambda^{k}\| w \|,\; \textnormal{ for }\; 
  k\ge 0, w \in V^-.
\end{split} \end{equation}
We further assume the hyperbolic set $\cT$ is an attractor,
that is, there is an open neighborhood $\cT'$ of $\cT$
such that $\cap_{n\ge0} f^n(\cT')=\cT$.

Uniform hyperbolic systems have the SRB measure,
\re{which is the fractal limiting stationary measure of chaotic systems,
named after Sinai, Ruelle, and Bowen}
\cite{Sinai1972,srbmap,srbflow}.
It has several characterizations, 
and for this paper, 
we define it as the weak limit of evolving Lebesgue measures \cite{young2002srb}.
That is,
\[ \begin{split}
  \rho = \lim_{n\rightarrow\infty} f_*^n \rho_0,
\end{split} \]
where $\rho_0$ is the Lebesgue measure, and 
\re{here $f_*$ is the pushforward operator on measures.}
Hence, for almost all $u_0$ in a neighborhood of the attractor,
the empirical distribution of the trajectory starting from $u_0$
weakly converges to the SRB measure, 
and $\avg{\Phi}$ is in fact defined as
\begin{equation} \begin{split} \label{e:ae}
  \avg{\Phi} := \rho(\Phi) .
\end{split} \end{equation}
Hence, our goal is to differentiate the SRB measure,
that is, to compute $\delta \rho$.

Finally, we define equivariant sequences.
A sequence, say $\{v_k\}_{k\in \Z}$, 
depends on the underlying trajectory, in particular its initial condition, $u_0$.
We typically do not write out $u_0$ explicitly as a variable of $v_k$,
but when computing integrations such as $\rho(v_k)$, 
we let $u_0$ distribute according to $\rho$.
In this paper, a sequence is said to be equivariant if its 
evolution commutes with the evolution of the initial condition,
that is,
\[ \begin{split}
  v_k(u_0) = v_0(u_k).
\end{split} \]
For equivariant sequences, 
due to the invariance of SRB measures,
\begin{equation} \begin{split} \label{e:cov}
  \rho(v_k):=\int v_k(u_0) \rho (du_0) 
  = \int v_0(u_k) \rho (du_0)
  = \int v_0(u_0) \rho (du_0)
  =: \rho(v_0).
\end{split} \end{equation}
If given a function, say $g$, then $g_k(u_0):=g(u_k)$ is equivariant by definition.
In this paper, some sequences are equivariant, such as the shadowing direction $v$,
and later $v^A$;
however, some are not equivariant, such as $e^P, e^N, e^{PN}$, and $v^p$.
It is important to apply equation~\eqref{e:cov} only on equivariant sequences.

\subsection{Shadowing methods}
\label{s:shadowing methods}
\hfill\vspace{0.1in}

Uniform hyperbolic systems have the shadowing property.
Roughly speaking, after perturbing the parameter by $\Delta \gamma$, 
we can shift each state by a small amount, $v_k\Delta \gamma$, 
to obtain a new trajectory of the perturbed system,
which is called the shadowing trajectory \cite{Bowen_shadowing,anosov_shadow}.
Hence, although most inhomogeneous solutions grow exponentially fast,
there is a special inhomogeneous tangent solution, the shadowing direction,
whose norm remains bounded.

We first write out an explicit formula of the shadowing direction.
At each step, split $X$ into stable and unstable components, 
and propagate the stable component into the future, the unstable component into the past. 
More specifically, 
\begin{equation} \label{e:shadowing_diffeo}
  v_{k} = \sum_{n\ge 0}f_*^nX_{k-n}^- - \sum_{n\le -1} f_*^n X_{k-n}^+ \,,
\end{equation}
Here $X^- := P^- X$, $X^+ := P^+ X$,
where $P^-$ and $P^+$ are oblique projection operators
onto the stable and unstable subspace.
Due to the exponential decay of stable and unstable components,
both summations converge and $v$ is bounded.

To use the shadowing property for computing derivatives, 
shadowing methods make an extra assumption that
shadowing trajectories are representative of the perturbed system.
That is, for the perturbed system,
$\avg{\Phi}:=\rho(\Phi)$ can be computed from the shadowing trajectory.
This is a very strong assumption,
since it essentially says that the new system is so similar to the old system
that the old behavior is shadowed;
it is equivalent to the existence of a smooth conjugation map between the two systems.
A conjugation map does exist, but it is not smooth enough to preserve representative behaviors.
Hence, the extra assumption is typically false;
it causes an error, which will be examined in section~\ref{s:relation}.

For now, we assume that shadowing trajectories are representative of the long-time behavior;
hence, we can take their difference to compute the change in the averaged objective.
Due to the boundedness of the shadowing directions, 
the limit of summation and the limit in the derivative can interchange place, so
\begin{equation} \label{e:dJds_diffeo}
  \delta \avg{\Phi} 
  \approx \delta \Big( \lim\limits_{K\rightarrow\infty} 
    \frac{1}{K} \sum_{k=0}^{K-1} \Phi(u_k) \Big)
  = \lim_{K\rightarrow \infty} \frac 1K \sum_{k=0}^{K-1} 
  \Phi_{uk} v_{k} 
  \stackrel{a.e.}{=} \rho(\Phi_uv)
  =: \delta^{sd} \avg{\Phi}
  \,,
\end{equation}
where $\Phi_{uk}:=\partial \Phi/\partial u(u_k)$ is a row vector;
\nax{it is, in fact, a differential form.}
Here the approximation sign reflects the error introduced by the extra assumption,
and $\stackrel{a.e.}{=}$ means to hold almost everywhere on the basin of the attractor.
\nax{
The target of all shadowing methods is to compute $\delta^{sd}\Phi_{avg}$ via first computing $v$.
We say `target' here,
because the convergence of nonintrusive shadowing is not yet justified;
we will prove this convergence in section~\ref{s:NIS}.
We shall also see that the target of shadowing methods equals the shadowing contribution of the linear response.
}

To efficiently compute shadowing directions,
we first notice that the seemingly complicated formula in equation~\eqref{e:shadowing_diffeo} 
can be equivalently characterized by:

\begin{lemma}
For any trajectory on the attractor,
the shadowing direction is the only inhomogeneous tangent solution that is bounded for all time.
\end{lemma}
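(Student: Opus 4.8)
The plan is to prove the two directions of the characterization separately: first that the explicit formula in \eqref{e:shadowing_diffeo} does define a bounded inhomogeneous tangent solution, and second that any two bounded inhomogeneous tangent solutions must coincide (uniqueness). Since both statements are short, I would handle them in sequence.

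For existence and boundedness, I would first verify that the sequence $\{v_k\}$ defined by \eqref{e:shadowing_diffeo} actually solves the inhomogeneous tangent equation \eqref{e:inhomo_tangent_diffeo}. This is a direct manipulation: apply $f_*$ to $v_k$, reindex the two sums, and use $f_* P^\pm = P^\pm f_*$ (the projections commute with the dynamics because the splitting $V^+\oplus V^-$ is $f_*$-invariant) together with $X^+_{k+1}+X^-_{k+1}=X_{k+1}$ to recover $v_{k+1}$. Boundedness then follows from the hyperbolicity estimates \eqref{e:tangent decay}: in the stable sum each term $f_*^n X^-_{k-n}$ has norm $\le C_1\lambda^n \|X^-_{k-n}\|$, and in the unstable sum each term $f_*^n X^+_{k-n}$ (with $n\le -1$) has norm $\le C_1\lambda^{-n}\|X^+_{k-n}\| = C_1\lambda^{|n|}\|X^+_{k-n}\|$; since $X$ is continuous on the compact attractor and the projections are uniformly bounded, $\|X^\pm_j\|$ is bounded uniformly in $j$, so both geometric series converge and $\|v_k\|$ is bounded by a constant $C_1 \sup_j\|X_j\| \cdot \frac{2}{1-\lambda}$ independent of $k$.

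For uniqueness, suppose $\{v_k\}$ and $\{v'_k\}$ are both bounded inhomogeneous tangent solutions; then their difference $w_k := v_k - v'_k$ solves the homogeneous tangent equation \eqref{e:homo_tangent_diffeo} and is bounded for all $k$. I would decompose $w_k = w_k^+ + w_k^-$ along the hyperbolic splitting. The unstable component $w_k^+ = f_*^k w_0^+$ would grow like $\lambda^{-k}$ as $k\to+\infty$ unless $w_0^+ = 0$; more carefully, running \eqref{e:tangent decay} with negative exponent, $\|w_0^+\| = \|f_*^{-k}(f_*^k w_0^+)\| \le C_1 \lambda^{k} \|w_k^+\|$, and since $\|w_k^+\|$ is bounded the right side tends to $0$, forcing $w_0^+=0$, hence $w_k^+=0$ for all $k\ge 0$. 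Symmetrically, using boundedness as $k\to-\infty$ (or equivalently $\|w_k^-\|\le C_1\lambda^{k}\|w_0^-\|$ and running time backward against boundedness) gives $w_0^-=0$. Therefore $w_0=0$ and $w_k=0$ for all $k$, so $v_k=v'_k$.

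I do not expect a serious obstacle here; the only mild subtlety is that the homogeneous solution in the uniqueness argument is defined on $k\ge 0$ only, so the backward estimate must be applied at the level of $w_0$ rather than by extending the trajectory — i.e.\ one uses the forward inequality $\|f_*^k w_0^-\|\le C_1\lambda^k\|w_0^-\|$ for the stable part (which alone does not kill it) combined with the fact that on a bilateral orbit the whole picture is invertible, so really one should argue on the two-sided extension of the trajectory, which exists since $f$ is a diffeomorphism. Spelling out that the bounded solution extends uniquely to a two-sided bounded solution, and then that each of $w^+$ (via $k\to+\infty$) and $w^-$ (via $k\to-\infty$) must vanish, is the one place to be careful; everything else is routine.
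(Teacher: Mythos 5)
Your proof is correct, and it is essentially the standard argument that the paper itself does not spell out: the lemma is stated without proof, and the only related computation in the paper appears later, in Lemma~\ref{l:homoinhomo}, where $v$ defined by \eqref{e:shadowing_diffeo} (along with $v'$ and $v^P$) is verified to satisfy the inhomogeneous tangent equation \eqref{e:inhomo_tangent_diffeo} by exactly the reindexing you describe; note that for that step you do not even need $f_*P^\pm=P^\pm f_*$, since the projections in \eqref{e:shadowing_diffeo} are applied before any pushforward and the two sums recombine to $X^-_{k+1}+X^+_{k+1}=X_{k+1}$. Your boundedness estimate (geometric decay of both sums, uniform bound on $\|X^\pm\|$ from compactness of the attractor and the uniformly bounded oblique projections) and your uniqueness argument (a bounded homogeneous difference has vanishing unstable part by letting $k\to+\infty$ and vanishing stable part by letting $k\to-\infty$) are the right ingredients. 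The one point to state cleanly is the one you circled around at the end: ``bounded for all time'' must be read over the bi-infinite orbit, which is the natural setting here since $f$ is a diffeomorphism and the formula \eqref{e:shadowing_diffeo} already uses $X_j$ for arbitrarily negative indices; on a forward-only half-orbit uniqueness genuinely fails, because adding any stable homogeneous solution preserves forward boundedness. Your phrase ``the bounded solution extends uniquely to a two-sided bounded solution'' is not the right fix — the backward extension is unique (invert $f_*$) but need not be bounded — so simply pose the lemma for two-sided solutions from the start and your two limiting arguments close the proof.
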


The nonintrusive shadowing algorithm
recovers above characterization by a constrained minimization.
The boundedness property is mimicked by minimizing the $l^2$ norm of $v$.
The fact that $v$ is an inhomogeneous tangent solution 
is recovered by the representation as the sum of a particular inhomogeneous
and several homogeneous tangent solutions.
More specifically, the nonintrusive shadowing algorithm solves
\begin{equation} \begin{split} \label{e:nilss}
  \min_{\{a_j\}_{j=1}^m\subset \R} \sum_{k=0}^{K-1} |v_k|^2 ,
  \quad\textnormal{ s.t. }\quad
  v = v' + \sum_{j=1}^{m} w_ja_j.
\end{split} \end{equation}
where $|\cdot|$ is the vector 2-norm, $K$ is the trajectory length;
$v'$ is an inhomogeneous tangent solution of any initial condition,
for example zero initial conditions;
$\{w_j\}_{j=1}^{m}$ are $m$ homogeneous tangent solutions with random initial conditions
\cite{Ni_NILSS_JCP,Ni_fdNILSS}.

Nonintrusive shadowing does not search the entire space of inhomogeneous solutions, 
which is $M$-dimensional.
Rather, the feasible set is reduced to a subspace of dimension $m$.
Such a reduced feasible set is still enough for us to find a bounded solution:
since $v'$ is solved by pushing-forward in time,
the only cause for its exponential growth is the unstable component.
This unstable component can be removed by a linear combination of $w_j$'s,
which also approximates the unstable subspace after pushing-forward for some time.
Section~\ref{s:NIS} quantitatively shows that this reduction of the feasible set causes no additional error.

Nonintrusive shadowing is the first numerical differentiation algorithm of chaos
whose computation is constrained to the unstable subspace:
this is achieved by the `nonintrusive' parameterization we used in equation~\eqref{e:nilss}.
`Nonintrusive' means that we use only $m$ many solutions of the most basic governing equations,
which is the tangent equation for this case,
but no other information such as the Jacobian matrices.
For cases with $m\ll M$, such as computational fluid problems,
nonintrusive shadowing is thousands of times faster than previous algorithms.
\ree{
For the examples we have so far,
the cost of nonintrusive shadowing is similar to the numerical simulation of the system
\cite{Ni_NILSS_JCP,Ni_CLV_cylinder}.
When the unstable dimension gets larger,
the cost of nonintrusive shadowing, per trajectory length,
can be larger than the simulation.
However, it seems that nonintrusive shadowing requires a shorter trajectory
than the averaged objective given by the simulation;
moreover, 
the computation of $m$ many tangent solutions can be greatly accelerated by a vectorized code.
Overall, for very unstable problems,
the cost of nonintrusive shadowing might still be comparable to simulations,
but more experiments are needed to verify or disprove this claim.
}

\subsection{Linear response formula} 
\hfill\vspace{0.1in}

In shadowing methods, the exponential growth of inhomogeneous tangent solutions
is tempered by granting some freedom in its initial condition, then minimizing its norm.
Another way to temper this exponential growth is to average over SRB measures.
By some formal interchange of limits, we can show that
\begin{equation}\label{e:ruelle1}
  \delta \avg{\Phi}
  = \sum_{n=0}^\infty \rho \ip{\grad (\Phi\circ f^n ), X} \,.
\end{equation}
Here $\ip{\cdot , \cdot }$ is the inner product in $\R^M$,
$\rho$ is the SRB measure, and $\delta:=\delta/\delta \gamma$.
\nax{
In $\R^M$, the gradient, $\grad \Phi = \Phi_u^T$, is a column vector, 
where $(\cdot)^T$ the matrix transposition.
}
By a different derivation, 
Ruelle and Dolgopyat proved that this formula indeed gives 
the correct derivative for uniformly hyperbolic and partially hyperbolic systems 
\cite{Ruelle_diff_maps,Dolgopyat2004}.

It should be noted that the linear response fails for certain systems,
for example, the tent map,
which is essentially nonuniformly hyperbolic \cite{Baladi2007}.
It certainly remains to be investigated when and how often the linear response fails,
but we should also notice that some arguments are invalid.
In particular, it is typical for shadowing trajectories to
differ from the long-time-averaged statistic of the perturbed system,
and the linear response formula already accounts for that.
As a related issue, due to numerical errors,
solutions of numerical simulations are typically shadowed by 
non-representative solutions of the true physical system,
but that numerical solution is still typically representative of the numerical system,
whose statistics are close to the true system.
This is perhaps one of the most basic assumptions of numerical simulations,
and it can hold true even when the linear response fails.

Numerically, the linear response formula can be directly implemented by an ensemble approach
\cite{Lea2000,eyink2004ruelle,lucarini_linear_response_climate,
lucarini_linear_response_climate2,Bahsoun2018}.
However, the integrand grows exponentially to $n$,
and the number of samples needed to evaluate the integration of $\rho$,
to a certain precision, is very large,
incurring large computational cost \cite{Chandramoorthy_ensemble_adjoint}.

To temper the large integrand in equation~\eqref{e:ruelle1},
we integrate by parts on the unstable manifold \cite{Gallavotti1996}, 
so that $\rho \ip{\grad (\Phi\circ f^n),X^+} = -\rho( (\Phi\circ f^n) \diverg_\sigma^+ X^+) $,
and
\begin{equation} \label{e:ruelle2}
  \delta \avg{\Phi}
  = \sum_{n=0}^\infty \rho\left[  \ip{\grad (\Phi\circ f^n ),X^-} 
  - (\Phi\circ f^n ) \diverg_\sigma^+  X^+ \right].
\end{equation}
Here $\diverg_\sigma^+$ is the divergence on the unstable manifold under the conditional SRB measure.
By definition, $\diverg_\sigma^+  X^+$ is a distribution,
but Ruelle showed that it is Holder continuous on a uniform hyperbolic attractor
\cite{Ruelle_diff_maps_erratum}.
For a more detailed discussion of this term in the context of computations, see \cite{flr}.

Equation~\eqref{e:ruelle2} circumvents the issue of exploding gradients,
since the first term involves propagating only the stable components into the future,
while the second term is subject to the exponential decay of correlation.
That is, because both $\Phi$ and $\diverg_\sigma^+  X^+$ are Holder continuous,
there is $C_2'>0$ and $\kappa_2\in(0,1)$, such that
\begin{equation} \label{e:decay1}
  \cor_{\Phi,\diverg_\sigma^+  X^+}(n) :=
  \left|\rho((\Phi\circ f^n ) \diverg_\sigma^+ X^+)
  -\rho(\Phi )\rho(\diverg_\sigma^+ X^+) \right|
  \le C_2' \kappa_2^n.
\end{equation}
Since $\rho (\diverg_\sigma^+  X^+) = 0$,
we have $\cor_{\Phi,\diverg_\sigma^+  X^+}(n) 
= \left|\rho((\Phi\circ f^n ) \diverg_\sigma^+ X^+) \right|$.
It is very convoluted to express $C_2'$ and $\kappa_2$ by properties of the dynamical systems.
Even if we could theoretically derive such formulas,
they would be difficult to compute for engineering applications.
To obtain a quantitative bound, we make a \nax{statistical} assumption,
assumption~\ref{a2} in section~\ref{s:error}, about \ree{the} decay of correlation.

For our purpose, we use a slightly different decomposition of the linear response,
\begin{equation} \begin{split} \label{e:ds123}
  &\delta \avg{\Phi} = \delta^{(1)} \avg{\Phi}
    + \delta^{(2)} \avg{\Phi}+ \delta^{(3)} \avg{\Phi} 
    \,, \quad \textnormal{where}\\
  &\delta^{(1)} \avg{\Phi}
    := \sum_{n\ge 0} \rho \ip{\grad (\Phi\circ f^n ),X^-}
    - \sum_{n\le -1} \rho \ip{\grad (\Phi\circ f^{n} ),X^+},\\
  &\delta^{(2)} \avg{\Phi}
    := \sum_{n< N} \rho \ip{\grad (\Phi\circ f^{n} ),X^+} 
    ,\,
  \delta^{(3)} \avg{\Phi}
    := -\sum_{n\ge N} \rho \left((\Phi\circ f^n ) \diverg_\sigma^+  X^+\right).
\end{split} \end{equation}
Here $N$ is a small positive integer.
We call $\delta^{(2)}\Phi_{avg} +\delta^{(3)}\Phi_{avg}$
the unstable contribution of the linear response,
because they only involve the unstable part of $X$.
For reasons to be explained later, we also denote the unstable contribution by $\delta\mu(\Phi)$.
We call \ree{$\delta^{(1)}\Phi_{avg}$} the shadowing contribution of the linear response,
because, as we shall see by theorem~\ref{t:equal},
$\delta^{(1)}\Phi_{avg} = \delta^{sd}\Phi_{avg}$.

\section{Approximating linear response by shadowing}
\label{s:relation}

In this section, 
we examine the difference between the linear response formula and 
the target of shadowing methods, $\delta^{sd}\Phi_{avg}$.
Notice that the nonintrusive formulation does not appear in this section,
and our discussion applies to all shadowing methods.
\re{Compared} 
to previous proofs of shadowing methods \cite{Chater_convergence_LSS,wang2014convergence}, 
which make the extra assumption that shadowing trajectories are representative,
here we replace that assumption by a bound 
of the remaining part of the linear response formula,
which is the unstable contribution.

\subsection{Shadowing methods' target is the shadowing contribution}
\hfill\vspace{0.1in}

To reveal the connection between shadowing and linear response,
we further explain how the linear response formula was proved for uniform hyperbolic systems.
When changing $\gamma$ to a new parameter, $\tilde \gamma$, 
$f$ is changed to $\tilde f:=f(\cdot,\tilde \gamma)$, 
and the SRB measure is changed to $\tilde \rho$, whose support, or the attractor, also moves.
Ruelle showed that there is a Holder diffeomorphism, $j$, 
between the two attractors, so that $\tilde f \circ j = j \circ f$.
Let $\mu(\cdot):=\tilde \rho (j(\cdot))$,
then $\mu$ has the same support as $\rho$,
and $\tilde \Phi_{avg}: =\tilde \rho (\Phi)= \mu(\Phi\circ j)$.
Differentiate with respect to $\gamma$, 
apply the product rule, we have
\[ \begin{split}
  \delta \avg{\Phi}=\rho(\delta(\Phi\circ j))+\delta \mu(\Phi)
    = \rho(\Phi_u \delta j)+\delta \mu(\Phi).
\end{split} \]
Here $\rho(\delta(\Phi\circ j))$ accounts for the change of location of the attractor;
$\delta \mu(\Phi)$ accounts for the difference between $\mu$ and $\rho$,
which are both stationary measures, but only $\rho$ is SRB.
Ruelle showed that these two terms have expressions given in equation~\eqref{e:ds123}:
\[ \begin{split}
  \rho(\delta(\Phi\circ j)) = \delta^{(1)} \avg{\Phi}  
  ,\quad
  \delta \mu(\Phi) = \delta^{(2)}\avg{\Phi} + \delta^{(3)}\avg{\Phi} .
\end{split} \]

The term $\rho(\delta(\Phi\circ j))$ is the derivative while assuming $\mu$ is fixed,
that is, assuming that the SRB measure is preserved by the conjugation map $j$.
This assumption is very similar to the assumption we made for shadowing methods, 
hinting the equivalence $\delta^{sd} \avg{\Phi}= \delta^{(1)} \avg{\Phi}$.
In fact, using the Taylor expansion of $\delta j$, Ruelle showed that $\delta j =v$,
which immediately yields this equivalence;
however, this equivalence admits a much more elementary proof which does not involve $j$.

\begin{theorem}\label{t:equal}
  The shadowing contribution of the linear response 
  is exactly the target of shadowing methods.
  That is,
  \[ \begin{split}
  \delta^{(1)} \avg{\Phi}  = \delta^{sd} \avg{\Phi} .
  \end{split} \]
Here $\delta^{(1)} \avg{\Phi}$ is defined in equation~\eqref{e:ds123},
and $\delta^{sd} \avg{\Phi}$ is defined in equation~\eqref{e:dJds_diffeo}.
\end{theorem}

\begin{proof}
  Apply the invariance of the SRB measure, we have
\[ \begin{split}
  &\delta^{(1)} \avg{\Phi}
  = \sum_{n\ge0} \rho \left[ \ip{\grad (\Phi\circ f^n),X^-}\circ f^{-n} \right]
  - \sum_{n\le -1} \rho \left[ \ip{\grad (\Phi\circ f^{n} ),X^+}\circ f^{-n} \right].
\end{split}\]
  By the exponential decay, the above formula converges absolutely,
  hence we can use Fubini's theorem to interchange summation and integration, and
\[ \begin{split}
  \delta^{(1)} \avg{\Phi}
  = \rho \left[ \sum_{n\ge0}
  \ip{\grad (\Phi\circ f^n),X^-}\circ f^{-n} 
  - \sum_{n\le -1} \ip{\grad (\Phi\circ f^{n} ),X^+}\circ f^{-n} \right]\\
\end{split}\]
By definition of pushfoward operators,
\[ \begin{split}
  \ip{\grad (\Phi\circ f^n),X^\pm}(u_{-n}) = \Phi_{u}f_*^nX^\pm_{-n}.
\end{split} \]
\[ \begin{split}
 \delta^{(1)} \avg{\Phi}
  &= \rho \left[
    \sum_{n\ge0} \Phi_{u} f_*^n X^-_{-n}
  - \sum_{n\le -1} \Phi_{u}f_*^n X^+_{-n} 
  \right] 
  = \rho ( \Phi_u v )
  = \delta^{sd} \avg{\Phi},
  \end{split} \]
where the shadowing direction, $v$, is defined in equation~\eqref{e:shadowing_diffeo}.
\end{proof}

The shadowing method is off from the correct linear response by a systematic error, 
$\delta \mu(\Phi)$, which is the unstable contribution.
A sufficient condition for this term to be zero is that 
$j$ can be extended to a $C^1$ diffeomorphism over the entire phase space.
When that happens, absolute continuity to the Lebesgue measure is preserved, 
and $\mu$ is also the limit of evolving the Lebesgue measure.
Since the SRB measure is the unique limit of evolving the Lebesgue measure,
$\mu$ must always be the SRB measure on the original attractor, 
which yields $\delta \mu \equiv 0$.
Such a $C^1$-extendable $j$ exists, for example,
when the perturbed dynamical system is obtained 
by distorting a neighborhood of the attractor via a $C^1$ map.
However, it should still be rare for 
$j$ to be $C^1$-extendable;
in fact, under uniform hyperbolicity,
$j$ is typically only a Holder homeomorphism on the attractor,
and the unstable contribution is not zero.
So instead of hoping the systematic error of shadowing to disappear,
we shall give an estimation of the unstable contribution,
and examine when it can be small.

\subsection{Statistical assumptions for estimating the unstable contribution} 
\label{s:assump}
\hfill\vspace{0.1in}

We state some assumptions to be used in the next subsection, 
where we bound the unstable contribution.
By equation~\eqref{e:ds123}, 
the unstable contribution is related to the magnitude of $\Phi_u X^+ = (\Phi_u P^+)(P^+ X)$.
Intuitively, if $X$ and $\Phi_u$ have no 
particular reason to be aligned with unstable subspaces,
projection to a low dimensional unstable subspace significantly reduces the vector norms.
Furthermore, if the decay of correlation is fast, 
we can estimate the entire unstable contribution by the leading term.
Hence, the unstable contribution should be positively related to the unstable ratio, $m/M$.

As we can see, this argument is based on two phenomena, 
which shall be stated quantitatively by two statistical assumptions in this subsection.
It should be noted that these assumptions are for quantification of the errors;
the qualitative behaviors, such as a small systematic error of shadowing,
and the convergence of nonintrusive shadowing to the shadowing contribution,
can hold true beyond these specific assumptions.
These assumptions are just one way to quantify these phenomena,
which may as well be quantified by other statements.
To conclude, our quantitative bounds are statistical results;
in particular, they base on the statistical assumptions.
More experiments are needed for verifying our assumptions,
although current available examples do suggest the assumptions,
or at least the phenomena they intend to describe, hold in some way,
and nonintrusive shadowing works well for systems with a small unstable ratio.

For fixed $X$ and $\Phi$, 
it is difficult to give a quantitative prior error bound for shadowing methods,
because computing $X^+$ is 
more expensive than nonintrusive shadowing,
at which point \ree{a priori} estimation would \ree{not be beneficial}.
To give an estimation of the shadowing error beforehand,
we view $\Phi$ and $X$ as random functions.
Then we can bound the expectation of the shadowing error under
the particular statistical model we choose for $\Phi$ and $X$.
Also, we let $U$ be a random variable distributed according to the SRB measure,
whose total measure is normalized to 1.
Choosing the random functions $\Phi$ and $X$ does not affect the dynamical system and its SRB measure; hence $\Phi$ and $X$ are independent of $U$.

\begin{assumption}\label{a1}
  For any $u$, $X(u)$ and $\Phi_u(u)$ follow multivariate normal distributions $\N(0, I_M)$.
  Moreover, for any sequence $\{u_n\}_{n\in\Z}$, 
  the sequence $\{X(u_n)\}_{n\in\Z}$ is independent of $\{\Phi_u(u_n)\}_{n\in\Z}$.
  Since $\Phi$ and $X$ are independent of $U$, we can write in conditional probability,
  \[ \begin{split}
    (X(U) \,|\, U=u) \sim \N(0, I_M),  \quad
    (\Phi_u(U) \,|\, U=u) \sim \N(0, I_M), \quad
    \forall u. \\
    \{X(U_n)\}_{n\in\Z} \indep
    \{\Phi_u(U_n)\}_{n\in\Z} 
    \,|\, \{U_n=u_n\}_{n\in\Z} ,\quad
    \forall  \{u_n\}_{n\in\Z} .
  \end{split} \]
\end{assumption}

\begin{remark*}
(1) An example satisfying this assumption is that both $X$ and $\Phi_u$ are constant 
vector fields on $\R^M$, whose values are drawn from two independent Gaussian.
(2) For our purpose, it suffices to assume only for the case where $\{u_n\}_{n\in \Z}$
is a trajectory.
\nax{
(3) Roughly speaking, lemma~\ref{t:JuX+} only needs the independence between $X$ and $\Phi_u$
at any one $u$;
theorem~\ref{t:approximation} needs the independence at two $u$'s;
theorem~\ref{t:convergeNI} and~\ref{t:sample error} need the independence on a full trajectory.
}
\end{remark*}

This assumption is for quantifying the phenomenon that $X$ and $\Phi_u$ are not particularly aligned with the unstable subspace.
This phenomenon is plausible, since typically $X$ and $\Phi_u$ are determined without any 
prior knowledge of the unstable direction.
For example, in a chaotic flow over a cylinder \cite{Ni_CLV_cylinder},
$X$ is perturbation on the inlet condition, $\Phi$ is the drag/lift on the cylinder,
hence, $X$ and $\Phi_u$ are only non-zero at the inlet and surface of the cylinder.
On the other hand, the unstable modes are active mainly in the wake of the cylinder.
The different active areas indicate that $X$ and $\Phi_u$ 
are at least not aligned with the unstable direction, 
if not exactly $\Phi_u P^+X=0$.

Then we make a statistical assumption about \ree{the} decay of correlation.
Roughly speaking, in equation~\eqref{e:decay1},
we assume the exponential decay of correlation starts from the zeroth term.

\begin{assumption}\label{a2}
For the entire distribution of $\Phi$ and $X$,
there are uniform constants $C_2\ge 1,  0<\kappa_2<1$, such that
\[ \begin{split}
  \cor_{\Phi,\diverg_\sigma^+  X^+}(n) = 
  \left|\rho((\Phi\circ f^n ) \diverg_\sigma^+ X^+) \right|
  \le C_2 \kappa_2^n \rho(|\Phi_u X^+|).
\end{split} \]
\end{assumption}

\begin{remark*}
(1)
Here $\rho(|\Phi_u X^+|)$ is a bound for the zeroth term, since
\[ \begin{split}
  \cor_{\Phi,\diverg_\sigma^+  X^+}(0) =
  \left|\rho(\Phi \diverg_\sigma^+ X^+) \right|
  = \left|\rho(\Phi_u X^+ ) \right|
  \le \rho(|\Phi_u X^+|).
\end{split} \]
(2) A typical trick to break this uniformity assumption is changing $\Phi$ to $\Phi'=\Phi\circ f^n$.
If so, then the position of the peak value of the correlation is shifted by $n$;
that is, if $\cor_{\Phi,\diverg_\sigma^+  X^+}(0)$ is the largest correlation for $\Phi$,
then $\cor_{\Phi',\diverg_\sigma^+  X^+}(-n)$ is the largest correlation for $\Phi'$.
This breaks our assumption 2, 
which basically says that $\cor(0)$ is the largest.
However, this trick does not affect $\delta\mu(\Phi)$, 
and our bound in theorem~\ref{t:approximation} still works.
(3) 
Our work extends to slower, yet summable, decorrelation rates.
(4)
For numerical investigations of the decorrelation, see for example \cite{Casati1982}.
\end{remark*}

We choose to put $C_2$ and $\kappa_2$ as part of assumption~\ref{a2},
which can be numerically verified by the fast linear response algorithm for large $n$.
If $\Phi$, $X$, $f$, and the unstable subspace are known,
it is theoretically possible to derive the Holder norm of $\Phi$ and $\diverg_\sigma^+ X^+$,
and then derive an expression for $\kappa_2$.
If we further replace $L^1$ norms on the right side of the inequality by Holder norms,
it is then possible to write out $C_2$.
However, those expressions, though theoretically exist, 
would be too complicated, if not impossible, to compute.
Moreover, the bound by Holder norms can be very pessimistic:
for example, if $\Phi_u$ is orthogonal to $V^+$, or $X$ is parallel to $V^-$,
the unstable contribution would be zero, but the Holder bound can be large.
It is also possible to state a mixed bound, using $L^1$ norm for $n$ small and Holder norm for $n$ large.

\subsection{Estimating the unstable contribution} 
\label{s:error}
\hfill\vspace{0.1in}

In this subsection, we bound the unstable contribution, $\delta \mu(\Phi)$,
which is the systematic error of shadowing.
The rough ideas of our estimation have been described at the beginning of section~\ref{s:assump}.

We first define a norm.
For a measurable function, $g(\Phi,X, u)$, 
\[ \begin{split}
  \|g\| :=(\E(g^2))^{0.5} =(\E(\E(g^2|\Phi,X)))^{0.5},
\end{split} \]
where the expectation $\E$ is with respect to the joint distribution of $(\Phi,X, u)$,
with $u$ distributed according to the SRB measure $\rho$;
the conditional expectation $\E(\cdot|\Phi,X)=\rho(\cdot)$.
When a function does not depend on one of $(\Phi,X, u)$,
we still think of it as a three-variable function, and compute its norm.
For example, $\rho(g)$ does not depend on $u$, so
\[ \begin{split}
  \|\rho(g)\| 
  =(\E(\rho(g)^2))^{0.5}
  =(\E(\E(g|\Phi,X)^2))^{0.5}.
\end{split} \]
Notice that $\|\rho(g)\|\neq\|g\|$;
in fact, by Jensen's inequality, $(\rho(g))^2 \le \rho(g^2)$, hence 
\begin{equation} \begin{split} \label{e:jensen}
\|\rho(g)\|\le\|g\|.
\end{split} \end{equation}
For a vector field $v$, define norm $\|v\|:= \|\, |v|\, \|$, where $|\cdot|$ is the vector 2-norm.

\begin{lemma} \label{t:JuX+}
  Under assumption~\ref{a1},  
  \[ \begin{split}
    \frac {\| \Phi_uX^+ \|} {\|\Phi_uX\|} \le \frac 1{\sin\alpha} \sqrt{\frac mM},
  \end{split} \]
  where $\alpha$ is the smallest angle between stable and unstable subspace on the attractor.
\end{lemma}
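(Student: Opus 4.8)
The plan is to bound the numerator and denominator separately, both reduced to expectations of quadratic forms in the two independent Gaussian vectors $X(u)$ and $J_u(u)$. Fix $u$ on the attractor and condition on $U=u$. Since $J_u$ and $X$ are independent standard normals, for any fixed matrix $A=A(u)$ we have the elementary identity $\E(|J_u A X|^2 \mid U=u) = \E(\|A^T J_u^T\|^2 \mid U=u)\cdot(\text{const})$, which upon tracing out both Gaussians gives $\E(|J_u A X|^2 \mid U=u) = \|A\|_F^2$, the squared Frobenius norm of $A$. Applying this with $A = P^+(u)$ and with $A = I_M$ respectively, and then taking the outer expectation over $U\sim\rho$, yields
\begin{equation*}
  \|J_u X^+\|^2 = \E\big(\|P^+(U)\|_F^2\big), \qquad \|J_u X\|^2 = \E\big(\|I_M\|_F^2\big) = M.
\end{equation*}
So the whole statement reduces to the deterministic linear-algebra bound $\|P^+(u)\|_F^2 \le m/\sin^2\alpha$ for the oblique projection $P^+(u)$ onto the $m$-dimensional unstable subspace along the stable subspace, uniformly in $u$.

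For that bound I would argue as follows. The projection $P^+$ has rank $m$, so it has exactly $m$ nonzero singular values and $\|P^+\|_F^2 = \sum_{i=1}^m \sigma_i^2 \le m\,\|P^+\|_2^2$, where $\|\cdot\|_2$ is the operator norm. It then remains to show $\|P^+(u)\|_2 \le 1/\sin\alpha(u) \le 1/\sin\alpha$, a standard fact about oblique projections: if $P$ is the projection onto $V^+$ along $V^-$ and $\theta$ is the (minimal principal) angle between $V^+$ and $V^-$, then $\|P\|_2 = 1/\sin\theta$. I would include a one-line justification — for any $x$, write $x = x^+ + x^-$ with $x^\pm$ in the respective subspaces; then $\|x\|^2 = \|x^+\|^2 + \|x^-\|^2 + 2\langle x^+,x^-\rangle \ge \|x^+\|^2(1-\cos^2\theta) = \|x^+\|^2\sin^2\theta$ using $|\langle x^+,x^-\rangle|\le \cos\theta\,\|x^+\|\,\|x^-\|$ and minimizing over $\|x^-\|$ — giving $\|P^+x\| = \|x^+\| \le \|x\|/\sin\theta$. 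Since $\alpha$ is by definition the smallest such angle over the whole attractor, $\sin\alpha(u)\ge\sin\alpha$ for all $u$, and the uniform bound follows. Combining, $\|J_uX^+\|^2 \le \E(m/\sin^2\alpha) = m/\sin^2\alpha$, so $\|J_uX^+\|/\|J_uX\| \le (1/\sin\alpha)\sqrt{m/M}$, as claimed.

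The main obstacle is really just the first reduction step — making sure the independence and the tower property are applied cleanly so that the cross terms between $J_u$ and $X$ vanish and one is left with a pure Frobenius-norm computation; the measurability of $P^+(u)$ in $u$ and the interchange of $\E$ with the (finite-dimensional, hence harmless) trace are routine under uniform hyperbolicity. One should also note that the bound is uniform in $u$ precisely because Assumption~\ref{a1} puts the \emph{same} law $\N(0,I_M)$ at every point, so no integrability subtlety arises when passing the bound through $\E_{U\sim\rho}$.
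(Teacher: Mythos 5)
Your proposal is correct and follows essentially the same route as the paper: reduce $\|J_uX\|^2=M$ and $\|J_uX^+\|^2=\E\bigl(\mathrm{tr}(P^{+T}P^+)\bigr)$ via the independence of the two Gaussians, then bound the Frobenius norm of the oblique projection by $m/\sin^2\alpha$ using that $P^+$ has rank $m$ and operator norm at most $1/\sin\alpha$. The only cosmetic difference is that the paper bounds $\mathrm{tr}(P^{+T}P^+)$ by right-multiplying with an orthonormal basis adapted to $(V^-)^\perp\oplus V^-$ and bounding the $m$ nonzero columns, whereas you use the singular-value/rank argument together with the standard $\|P^+\|_2\le 1/\sin\alpha$ bound --- these are interchangeable.
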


\begin{remark*}
(1)
Here $\|\Phi_uX\|$ is an estimation of the magnitude of the true sensitivity.
(2)
This lemma can be generalized in several ways, for example,
$\alpha$ can be replaced by some kind of average instead of the lower bound,
assumption~\ref{a1} can also be replaced by more general models.
\end{remark*}

\begin{proof}
  By assumption, $X(U)$ and $\Phi_u$ have the same distribution for all $U$, hence
  \[ \begin{split}
  \E (\Phi_uX)^2 
  = \E (\sum_{j=1}^M \Phi_{u}^jX^j)^2
  = \E \E [(\sum_{j=1}^M \Phi_{u}^jX^j)^2| U] 
  = \E [(\sum_{j=1}^M \Phi_{u}^jX^j)^2| U] ,
  \end{split} \]
  where $X^j$ is the $j$-th coordinate of $X$.
  By independence, $\E [\Phi_{u}^iX^j\Phi_{u}^kX^l |U]=0$ unless $i=k$ and $j=l$.
  Hence,
  \begin{equation} \begin{split} \label{e:bet}
  \E (\Phi_uX)^2 
  = \sum_{j=1}^M \E [(\Phi_{u}^jX^j)^2 | U]
  = M
  \quad\Rightarrow\quad
  \|\Phi_uX\| = \sqrt M
  .
  \end{split} \end{equation}
  Denote the entries in the oblique projection matrix $P^+$ by $P^+_{ij}$, then
  \[ \begin{split}
  \E (\Phi_uX^+)^2 
  &= \E (\Phi_uP^+X)^2
  = \E (\sum_{i,j} \Phi_{u}^iP^+_{ij}X^j)^2
  = \E\E[ (\sum_{i,j} \Phi_{u}^iP^+_{ij}X^j)^2 |U]\\
  &= \E \sum_{i,j} \E [(\Phi_{u}^iP^+_{ij}X^j)^2 |U]
  = \rho \left(\sum_{i,j} (P^+_{ij})^2\right).
  \end{split} \]
  The orthogonal invariance of Frobenius norm says that, 
  for any $M\times M$ orthogonal matrix $A$,
  \[
  \sum_{i,j} (P^+_{ij})^2
  = tr(P^{+T}P^+)
  = tr((P^+A)^{T}(P^+A))
  = \sum_{i,j} (P^+A)_{ij}^2.
  \]
  Let the first $m$ and the \ree{remaining} $M-m$ columns of $A$
  be \ree{an} orthonormal basis of $(V^-)^\perp$ and $V^-$,
  where $(V^-)^\perp$ is the orthogonal complement of $V^-$.
  Then, only the first $m$ columns of $P^+A$ are non-zero,
  and their norms are bounded by $1/\sin \alpha$.
  Hence,
  \[ \begin{split}
  \E (\Phi_uX^+)^2 
  = \rho \left(\sum_{i,j} (P^+A)_{ij}^2\right)
  = \rho \left(\sum_{j} |(P^+A)_{j}|^2\right)
  = \rho \left(\sum_{1 \le j\le M} |(P^+A_j)|^2\right)
  \\
  = \rho \left(\sum_{1 \le j\le m} |(P^+A_j)|^2\right)
  \le \rho\left( \frac m{(\sin\alpha)^2} \right) 
  = \frac m{(\sin\alpha)^2}
  ,
  \end{split} \]
  where $(\cdot)_j$ is the $j$-th column vector,
  and $|\cdot|$ is the vector 2-norm.
  The lemma is proved by dividing by equation~\eqref{e:bet}.
\end{proof}

\begin{theorem} [error of shadowing] \label{t:approximation}
  Under assumption~\ref{a1} and~\ref{a2},
  \[
    \frac{\|\delta \mu(\Phi)\|} {\|\Phi_uX\|} 
    \le \left(\frac {C_1} {(1-\lambda)\sin\alpha}
      + \frac {C_2 \kappa_2} {(1-\kappa_2)\sin\alpha}
    \right) \sqrt\frac{m}M.
  \]
\end{theorem}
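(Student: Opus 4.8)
The plan is to bound the two pieces of the unstable contribution separately, using the decomposition $\delta_s\mu(J)=\delta_s^{(2)}\avg{J}+\delta_s^{(3)}\avg{J}$ from equation~\eqref{e:ds123}, and then optimize over the free integer $N$. For the first piece, $\delta_s^{(2)}\avg{J}=\sum_{n<N}\rho\ip{\grad(J\circ f^n),X^+}$, I would rewrite each term via the pushforward identity $\ip{\grad(J\circ f^n),X^+}\circ f^{-n}=J_{uk}f_*^n X^+_{k-n}$ as in the proof of Lemma~\ref{t:equal}, but now keeping track of the fact that $X^+$ lies in the unstable subspace and is being pushed \emph{backward} in time (the indices $n<N$ include negative $n$, matching the past-propagation of the unstable component in equation~\eqref{e:shadowing_diffeo}). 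Applying $\|\rho(g)\|\le\|g\|$ from equation~\eqref{e:jensen} and the contraction estimate $\|f_*^n w\|\le C_1\lambda^{-n}\|w\|$ for $n\le 0$, $w\in V^+$, together with $\|J_u X^+\|\le\frac{1}{\sin\alpha}\sqrt{m/M}\,\|J_u X\|$ from Lemma~\ref{t:JuX+}, I expect a geometric sum $\sum_{n\le -1}C_1\lambda^{n}$ wait---one must be careful with the direction; the relevant sum runs over the indices appearing in $\delta_s^{(2)}$, which by comparison with $\delta_s^{(1)}$'s unstable tail is $\sum_{n\le -1}$, giving $\frac{C_1}{1-\lambda}$ after summing $\sum_{k\ge 1}\lambda^k$. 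This produces the first term $\frac{C_1}{(1-\lambda)\sin\alpha}\sqrt{m/M}$, and crucially it is \emph{independent of $N$}, so $N$ can be taken arbitrarily small (in fact the $N$-dependence must cancel, or the $N$-dependent remainder must vanish in the limit).

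For the second piece, $\delta_s^{(3)}\avg{J}=-\sum_{n\ge N}\rho((J\circ f^n)\diverg_\sigma^+X^+)$, I would invoke Assumption~\ref{a2} directly: each summand is bounded by $\cor_n\le C_2\gamma^n\rho(|J_uX^+|)$. Taking the $\|\cdot\|$-norm, using $\|\rho(|J_uX^+|)\|\le\|J_uX^+\|$ (Jensen again, since $|J_uX^+|\ge 0$ so $\rho(|J_uX^+|)\le\rho(|J_uX^+|)$ and then $(\rho(|J_uX^+|))^2\le\rho((J_uX^+)^2)$), and Lemma~\ref{t:JuX+}, I get $\|\delta_s^{(3)}\avg{J}\|\le C_2\big(\sum_{n\ge N}\gamma^n\big)\frac{1}{\sin\alpha}\sqrt{m/M}\,\|J_uX\|=\frac{C_2\gamma^N}{1-\gamma}\cdot\frac{1}{\sin\alpha}\sqrt{m/M}\,\|J_uX\|$. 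Sending $N\to 1$ (the smallest value consistent with $\delta_s^{(2)}$ still covering the range matched to the shadowing formula) gives the factor $\frac{C_2\gamma}{1-\gamma}$ and hence the second term $\frac{C_2\gamma}{(1-\gamma)\sin\alpha}\sqrt{m/M}$. Adding the two bounds and applying the triangle inequality for $\|\cdot\|$ yields the claimed estimate.

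The main obstacle I anticipate is bookkeeping around the free integer $N$ and the precise index ranges: equation~\eqref{e:ds123} defines $\delta_s^{(2)}$ as a sum over $n<N$ and $\delta_s^{(3)}$ over $n\ge N$, and one must verify that the $C_1$-term really is $N$-independent (so that the $N$-dependence is entirely confined to the exponentially small $\gamma^N$ tail) and that the choice making the bound tightest is the minimal admissible $N$, which the theorem statement implicitly takes to be $N=1$. This requires matching the unstable tail $-\sum_{n\le-1}\rho\ip{\grad(J\circ f^n),X^+}$ of $\delta_s^{(1)}$ against $\delta_s^{(2)}$, i.e., recognizing that $\delta_s^{(1)}+\delta_s^{(2)}$ telescopes so that the combined unstable-propagation sum is simply $\sum_{n\le -1}$ over \emph{all} negative $n$ regardless of $N$. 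A secondary technical point is justifying the interchange of the $\|\cdot\|$-norm (an $L^2$ norm in the randomness of $J,X$) with the infinite sums over $n$; this is routine by absolute convergence, exactly as Fubini was applied in the proof of Lemma~\ref{t:equal}, since the geometric bounds $C_1\lambda^{|n|}$ and $C_2\gamma^n$ are summable. Everything else is a direct chaining of equation~\eqref{e:jensen}, Lemma~\ref{t:JuX+}, the hyperbolicity estimate~\eqref{e:tangent decay}, and Assumption~\ref{a2}.
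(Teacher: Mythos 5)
Your proposal is correct and follows essentially the same route as the paper: fix $N=1$ in equation~\eqref{e:ds123}, bound each term of $\delta_s^{(2)}\avg{J}$ by $C_1\lambda^{-n}\sqrt m/\sin\alpha$ using Assumption~\ref{a1}, the hyperbolicity estimate~\eqref{e:tangent decay}, Jensen's inequality~\eqref{e:jensen} and the projection bound underlying Lemma~\ref{t:JuX+}, bound each term of $\delta_s^{(3)}\avg{J}$ by $C_2\gamma^n\sqrt m/\sin\alpha$ via Assumption~\ref{a2} and Lemma~\ref{t:JuX+}, then sum the two geometric series and divide by $\|J_uX\|=\sqrt M$. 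The only slip is bookkeeping: with $N=1$ the first sum runs over $n\le 0$ (not $n\le -1$), so $\sum_{n\le 0}\lambda^{-n}=1/(1-\lambda)$, and that bound is not $N$-independent (larger $N$ would add expanding terms $\lambda^{-n}>1$); the paper simply fixes $N=1$, and this does not change the stated estimate.
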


\begin{remark*}
  (1) 
  We may as well write $\delta^{(2)} \avg{\Phi}$ in the same form as $\delta^{(3)} \avg{\Phi}$,
  and bound it by decay of correlations.
  But if $\lambda<\kappa_2$ or $C_1<C_2$, our bound here can be sharper.
  (2)
  Our estimation here also bounds the error of the blended response algorithm.
  Blended response introduces approximations on the unstable contribution,
  so its error should be somewhat smaller than shadowing,
  although it is difficult to quantify the error more accurately without extra assumptions.
  (3) 
  To generalize this theorem, 
  we may replace the lower bound of $\kappa_2$ and $\lambda$ by some kind of average.
  Slow decorrelation or decay \ree{will} not only affect shadowing methods;
  they make most theories and computations related to SRB measures difficult.
  (4)
  For a given application,
  the posterior error of shadowing can be obtained by comparing with finite differences
  or the fast linear response algorithm.
\end{remark*}

\begin{proof}
  Set $N=1$ in equation~\eqref{e:ds123}. 
  First notice that the exponential decay of terms in $\delta^{(2)} \avg{\Phi}$ 
  is given by propagating unstable vectors  \ree{backward} in time.
  Note that $\Phi_u(f^n(u))$ and $X(u)$ are independent by assumption~\ref{a1}, 
  we have
  \[
   \left\| \ip{\grad (\Phi\circ f^{n} ),X^+} \right\|^2
   = \left\| \Phi_u  f_*^{n} P^+ X \right\|^2
   = \rho\left(\sum_{i,j} ( f_*^{n}P^+)_{ij}^2\right).
  \]
  Use the same $A$ as in the proof of lemma~\ref{t:JuX+}, 
  then use the fact that the non-zero columns in $P^+A$ are in the unstable subspace,
  and $f_*^n$ reduces their norms for $n\le0$,
  \[
   \rho\left(\sum_{i,j} ( f_*^{n}P^+)_{ij}^2\right)
   = \rho\left(\sum_{i,j} ( f_*^{n}P^+A)_{ij}^2\right)
   = \rho\left(\sum_{1\le j\le m} |f_*^{n}P^+A_{j}|^2\right)
   \le C_1^2 \lambda^{-2n} \frac{m} {(\sin\alpha)^2}.
  \]
  Hence, by equation~\eqref{e:jensen},
  \[
   \left\|\rho \ip{\grad (\Phi\circ f^{n} ),X^+} \right\|
   \le \left\| \ip{\grad (\Phi\circ f^{n} ),X^+} \right\|
   \le C_1 \lambda^{-n} \sqrt m/  \sin\alpha .
  \]
  On the other hand,
  the exponential decay of terms in $\delta^{(3)} \avg{\Phi}$ is due to the decorrelation,
  with the rate given by assumption~\ref{a2}.
  \[ \begin{split}
    \left\|\rho \left((\Phi\circ f^n ) \diverg_\sigma^+  X^+\right)\right\| 
    \le C_2 \kappa_2^n \| \rho(|\Phi_u X^+|) \|
    \le C_2 \kappa_2^n\|\Phi_u X^+\|.
  \end{split} \]
  Further use the estimation of $\|\Phi_u X^+\|$ in lemma~\ref{t:JuX+}, we have
  \[
    \left\|\rho \left((\Phi\circ f^n ) \diverg_\sigma^+  X^+\right)\right\| 
    \le C_2 \kappa_2^n\sqrt m/  \sin\alpha. 
  \]
  Finally, the error of shadowing methods is bounded by sums of two geometric series.
\[\begin{split}
  \left\|\delta^{(2)} \avg{\Phi}\right\|
  \le \sum_{n\le 0} \left\| \rho \ip{\grad (\Phi\circ f^{n} ),X^+} \right\|
  \le \frac {C_1\sqrt m} {(1-\lambda)\sin\alpha}  ;\\
  \left\|\delta^{(3)} \avg{\Phi}\right\|
  \le \sum_{n\ge 1}  \left\|\rho \left((\Phi\circ f^n ) \diverg_\sigma^+  X^+\right)\right\|
  \le \frac {C_2 \kappa_2\sqrt m} {(1-\kappa_2)\sin\alpha}.
\end{split}\]
The proof is completed by the fact that
$\delta \mu(\Phi) = \delta^{(2)} \avg{\Phi}+\delta^{(3)} \avg{\Phi}$.
\end{proof}

By our estimation, an interesting scenario where shadowing methods have small error 
is when the unstable ratio $m/M\ll 1$.
This is typically the case for systems with dissipation, such as fluid mechanics,
where nonintrusive shadowing is successful 
\cite{Ni_CLV_cylinder,Ni_nilsas,Ni_NILSS_JCP,Blonigan_2017_adjoint_NILSS,Chandramoorthy2018_nilss_ad}.
In fact, \ree{the} SRB measure was invented for dissipative systems,
many of which have low dimensional unstable subspaces.
However, there are examples with a large unstable ratio where shadowing methods fail,
such as the expanding circle to be discussed later.
A remedy to reduce the systematic error is given in the next subsection.

\subsection{Corrections to shadowing methods} 
\hfill\vspace{0.1in}
\label{s:correction}

When the systematic error of the shadowing method, the unstable contribution, is large, 
it can be reduced by further adding $\delta^{(2)} \avg{\Phi}$, defined in equation~\eqref{e:ds123},
to the result of shadowing methods.
This correction reduces \ree{but does not eliminate} the systematic error of shadowing.
By the proof of theorem~\ref{t:approximation}, the relative error is reduced to
\[
   \frac{\left\|\delta^{(3)} \avg{\Phi}\right\|} {\|\Phi_uX\|} 
  \le \frac {C_2 \kappa_2^N} {(1-\kappa_2)\sin\alpha} \sqrt \frac mM.
\]
In fact, earlier work on shadowing methods suggested that relaxing
the constraint in the optimization could improve the accuracy \cite{Blonigan_MSS}.
By our current analysis, we now know that is because relaxing the constraint 
may leak some unstable vectors into the shadowing direction computed by shadowing methods:
this is equivalent to adding some unstable contributions.

When $N$ is small, the trajectory previously used in the shadowing method is long enough
to average out the noise caused by the exponentially growing integrands in 
$\delta^{(2)}\Phi_{avg}$,
and the cost does not change with $N$.
Further increasing $N$ exhausts the unstable contribution;
however, when the previous trajectory is not long enough,
this incurs large computational cost.
The asymptotic cost of this correction
is significantly higher than the fast linear response algorithm,
whose integrand grows only as $O(\sqrt N)$ \cite{flr}.
However, the correction here does not require 
the heavy derivation as in the fast linear response algorithm,
hence it can be more easily generalized to, for example, continuous time and adjoint versions.

We illustrate the error of shadowing and the efficacy of the correction term 
on the 1-dimensional sawtooth map, or the expanding circle,
which was previously used as a counter example of shadowing methods \cite{BloniganPhdThesis}.
\re{It is also the underlying source of chaos for several other counter examples 
such as the solenoid map.}
Now we know that shadowing methods fail because the only dimension is unstable,
which means the unstable ratio, $m/M$, is as large as it can be.
Due to the very fast decay of correlation,
the proposed correction accurately computes the unstable contribution with a small $N$.

\begin{example*}[expanding circle]
Consider the dynamical system on $[0,2\pi)$ given by 
\[ \begin{split}
u_{k+1} = f(u_k,\gamma) := 2 u_k + \gamma \sin u_k \pmod{2\pi},
\quad
\Phi(u) := \cos u    .
\end{split} \]
The base parameter is $\gamma=0$, at which we compute the derivative.
Although this map is 2-to-1 rather than a diffeomorphism,
the linear response formula is still correct \cite{Baladi2007}.

The SRB measure $\rho$ of a 2-to-1 map is still defined as the long-time limit of evolving the Lebesgue measure.
However, $f^{n}(\cdot)$ is no longer a function for $n<0$,
for example, $f^{-1}x$ can be either $x/2$ or $x/2+\pi$.
For a random variable $U$ distributed according to $\rho$,
$(\{U_n:=f^{n}(U)\}_{n\le0} \,|\, U)$ is a reversed Markov chain,
with $U_{n-1}$ equally distributed given $U_n$.
More specifically, for $n\le0$, the conditioned probability
\[ \begin{split}
  \Pro\left(U_{n-1}=\frac12 U_n\mid U_n\right)=
  \Pro\left(U_{n-1}=\frac12 U_n+\pi \mid U_n\right) = \frac 12.
\end{split} \] 
 
Since there is no stable subspace, 
\[ \begin{split}
  X^+(U)=X(U)
  =\sin(U_{-1}).
\end{split} \]
By the chain rule, 
\[ \begin{split}
  \grad (\Phi\circ f^{n} )(U)
  =-2^n\sin(U_n).
\end{split} \] 
Hence,
\[ \begin{split}
  \ip{\grad (\Phi\circ f^{n} ),X^+}
  =-2^n\sin(U_n) \sin(U_{-1}).
\end{split} \]

For this example, shadowing with correction gives the true derivative for any $N\ge0$.
To show this, we only need to check that each term in $\delta^{(3)} \avg{\Phi}$ is zero.
For $n\ge 0$, $U_n=2^nU$ is a well-defined function of $U$, 
and the $n$-th term in $\delta^{(3)} \avg{\Phi}$ is
\[ \begin{split}
  &-\rho \ip{(\Phi\circ f^n ) \diverg_\sigma^+  X^+}
= \rho \ip{\grad (\Phi\circ f^{n} ),X^+}
\\
=& -\E (2^n\sin(2^nU)\sin U_{-1})
= -\E(2^n\sin(2^nU) \E(\sin U_{-1} \mid U))
=0.
\end{split} \]
In this example, $\E$ means to take expectation only with respect to $\rho$,
since $X$ and $\Phi$ are given.

For better understanding, we also directly compute
the linear response, which we now know equals $\delta^{(1)} \avg{\Phi}+ \delta^{(2)} \avg{\Phi}$,
since $\delta^{(3)} \avg{\Phi}=0$.
For $\delta^{(2)} \avg{\Phi}$,
when $n\le-2$, 
\[
  \rho \ip{\grad (\Phi\circ f^{n} ), X^+}
= -\E (2^n\sin U_n\sin U_{-1})
= -\E(2^n\sin U_{-1}\E (\sin U_n \mid U_{-1}))
=0 .
\] 
The only non-zero term is $n=-1$,
\[ \begin{split}
  \rho \ip{\grad (\Phi\circ f^{-1} ), X^+}
  =- \frac 12 \E ((\sin U_{-1})^2)
  =- \frac 12 \E\E ((\sin U_{-1})^2|U) \\
  =- \frac 12 \rho \left(\frac 12\sin^2\frac u2 + \frac 12\sin^2\frac {u+2\pi}2\right)
  =- \frac 12 \rho \left(\sin^2\frac u2\right)
  = -\frac 14.
\end{split} \]
Hence $\delta^{(2)} \avg{\Phi} = -1/4$.
By the same computations as above, using the definition in equation~\eqref{e:ds123},
we can see that the shadowing contribution $\delta^{(1)} \avg{\Phi} =  1/4$.
Hence the linear response is $\delta^{(1)} \avg{\Phi}+ \delta^{(2)} \avg{\Phi}=0$.

Finally, as a verification of our theorem~\ref{t:equal}, 
we directly compute $v$ and $\delta^{sd}\avg{\Phi}$,
\[ \begin{split}
  v(u) =  - \sum_{n\le -1} f_*^n X_{-n}^+ (u)
  =  - \sum_{n\le -1} 2^n \sin(2^{-n-1}u) 
  =  - \sum_{l\ge 0} 2^{-l-1} \sin(2^lu) .
\end{split} \]
Hence, by definition, the target of shadowing methods is
\[ \begin{split}
 \delta^{sd} \avg{\Phi} := \rho(\Phi_u v)
 = \rho\left(\sin u \sum_{l\ge 0} 2^{-l-1} \sin(2^lu)\right) 
 = \frac 12 \rho\left(\sin^2 u \right) 
 = \frac 14.
\end{split} \]
Here all terms with $l\ge1$ are zero, because
\[ \begin{split}
 \rho\left(\sin u \sin(2^lu)\right) 
 = \rho\left(\sin (u-\frac \pi 2) \sin(2^l(u-\frac\pi2))\right) 
 = \pm \rho\left(\cos u \sin(2^lu) \right)  \\
 = \pm \int_0^{2\pi}\cos u \sin(2^lu) \, du 
 = \pm \int_{-\pi}^{\pi}\cos u \sin(2^lu) \, du
 = 0,
\end{split} \]
where the positive sign is taken only when $l=1$,
and the last equality is because the integrand is an odd function.
Hence, $\delta^{sd} \avg{\Phi} =\delta^{(1)} \avg{\Phi} =  1/4$.
This is the same as the computational result in figure 2-17(a) of 
Blonigan's thesis \cite{BloniganPhdThesis},
where the interval was shrunk to $[0,1]$.
\qed
\end{example*}

When $M>1$, $X^+$ can be efficiently computed by a `little-intrusive' algorithm,
which requires both tangent and adjoint solvers.
Denote the adjoint unstable subspace by $\aV^+$,
then $\dim\aV^+ = \dim V^+$, and $\aV^+\perp V^-$.
Moreover, both the unstable tangent and adjoint subspaces
can be obtained by evolving homogeneous tangent and adjoint equations \cite{Ni_adjoint_shadowing}.
To find $X^+$, just solve the vector such that
\[ \begin{split}
  X^+\in V^+, \quad \ip{X-X^+,\aV^+}=0.
\end{split} \]
With $\{w_i\}_{i=1}^m$ as the basis of $V^+$,
we can write $X^+$ as $X^+ =\sum_{i=1}^m c_iw_i$, 
then there are exactly $m$ linear equations for $m$ undetermined coefficients, $\{c_i\}_{i=1}^m$.
The cost of the little-intrusive algorithm is only $O(m)$;
in contrast, the blended response algorithm also requires computing $X^+$,
but was done with cost $O(M)$ \cite{abramov2007blended}.

\section{Convergence of nonintrusive shadowing}
\label{s:NIS}

In this section we prove the convergence of the nonintrusive shadowing algorithm,
given in equation~\eqref{e:nilss},
to the shadowing contribution.
The error of this convergence further includes two parts:
the first is the difference between the true shadowing direction 
and the one computed by nonintrusive shadowing;
the second is the sampling error caused by computing the shadowing contribution
from the true shadowing direction, but on a finite trajectory.
Together with the estimation of the unstable contribution in section~\ref{s:error},
we have the total error of approximating the linear response via nonintrusive shadowing algorithms.

\subsection{Auxiliary terms \texorpdfstring{$v', v^N, v^P, v^A, e^N, e^P, e^{PN}, \tilde e$}
{v's and e's}}
\hfill\vspace{0.1in}

In this section, we assume that in the nonintrusive shadowing algorithm 
in equation~\eqref{e:nilss},
\[ \begin{split}
  \spanof (w_1, \cdots, w_u)  = V^+.
\end{split} \]
This assumption can be achieved by evolving $w_i$'s for some time before the zeroth step,
since the unstable components in $w_i$'s grow faster than stable components.
In reality, such pre-process is typically not needed for nonintrusive shadowing to converge,
but making this assumption simplifies our theoretical analysis.
Should we want to extend our analysis to cases without this pre-process,
we need a sharp estimation of the unstable components 
in the random initial conditions of $w_i$'s.

We start with some definitions.
Denote the total number of steps by $K$.
Let $v$ be the shadowing direction in equation~\eqref{e:shadowing_diffeo}.
In the nonintrusive shadowing algorithm,
let $v'$ be 
\[ \begin{split}
  v'_k := \sum_{0\le n \le k-1}f_*^nX_{k-n} \,.
\end{split} \]
We will show that $v'$ is the inhomogeneous tangent solution 
solved from the zero initial condition.
Moreover, let $v^P$ be the pivot solution defined by
\[ \begin{split}
  v^P_k 
  := \sum_{0\le n \le k-1}f_*^nX_{k-n}^- - \sum_{n\le -1} f_*^n X_{k-n}^+ \,.
\end{split} \]
We will show that $v^P$ is in the feasible set of nonintrusive shadowing, 
and also close to both $v$ and $v^N$,
where $v^N$ is the solution of the nonintrusive shadowing algorithm.
Define $v^A$, which bounds both $v$ and $v^P$, by
\begin{equation} \begin{split} \label{e:dangd}
  v^A_k 
  := \sum_{0\le n}|f_*^nX_{k-n}^-| \nax{+} \sum_{n\le -1} |f_*^n X_{k-n}^+ | \,,
\end{split} \end{equation}
where $|\cdot|$ is the vector 2-norm.
$v^A$ and $v$ are equivariant, that is,
\[ \begin{split} \label{e:zhujin}
  v^A_k = v^A_0\circ f^k.
\end{split} \]
However, notice that $v^P$ is not equivariant:
that is why we will mostly bound it by $v^A$.
Moreover, we define the errors
\[ \begin{split}
  e^N:= v^N - v \,,\quad
  e^P:= v^P - v \,,\quad
  e^{PN}:= v^P -v^N \,.
\end{split} \]
Here $e^N$ is the error of the shadowing direction computed by nonintrusive shadowing.
On a finite trajectory, the shadowing contributions computed by $v$ and $v^N$ are different by 
\[ \begin{split}
  \tilde e^N := \frac 1K \sum _{k=0} ^{K-1} \Phi_{uk} e^N_k .
\end{split} \]

We give some basic properties of the auxiliary terms we just defined.

\begin{lemma}  \label{l:homoinhomo}
  $v, v'$, and $v^P$ are inhomogeneous tangent solutions 
  satisfying equation~\eqref{e:inhomo_tangent_diffeo};
  $v'_0 =0$;
  $v^P$ is in the feasible set of nonintrusive shadowing, that is, $v^P- v'\in V^+$.
  $e^N, e^P$, and $e^{NP}$ are homogeneous tangent solutions
  satisfying equation~\eqref{e:homo_tangent_diffeo}.
\end{lemma}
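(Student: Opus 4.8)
## Proof proposal for Lemma~\ref{l:homoinhomo}

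The plan is to verify each assertion directly from the explicit series definitions, using only the hyperbolicity bounds~\eqref{e:tangent decay} for convergence and the equivariance $f_* X_j^\pm = X_{j+1}^\pm$ (which follows from $f_*$ commuting with the projections $P^\pm$, since the splitting is $f_*$-invariant). First I would treat $v'$: since $v'_k = \sum_{0\le n\le k-1} f_*^n X_{k-n}$, the $k=0$ sum is empty, so $v'_0 = 0$. For the recursion, split off the $n=k$ term after reindexing: $v'_{k+1} = \sum_{0\le n\le k} f_*^n X_{k+1-n} = X_{k+1} + \sum_{1\le n\le k} f_*^n X_{k+1-n} = X_{k+1} + f_*\sum_{0\le m\le k-1} f_*^m X_{k-m} = X_{k+1} + f_* v'_k$, which is~\eqref{e:inhomo_tangent_diffeo}. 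The same bookkeeping works for $v$ and $v^P$, the only difference being that $v$ and $v^P$ also carry the backward sum $-\sum_{n\le -1} f_*^n X^+_{k-n}$; applying $f_*$ to this tail and reindexing shows it maps to the corresponding tail at step $k+1$ exactly, contributing nothing to the inhomogeneous term, so $v$ and $v^P$ also satisfy~\eqref{e:inhomo_tangent_diffeo}. Convergence of all the infinite sums is immediate from~\eqref{e:tangent decay}: the forward stable sum is dominated by $C_1\sum_{n\ge0}\lambda^n\|X^-_{\cdot}\|$ and the backward unstable sum by $C_1\sum_{n\ge1}\lambda^n\|X^+_{\cdot}\|$ (using $\|f_*^n w\|\le C_1\lambda^{-n}\|w\|$ for $w\in V^+$, $n\le 0$), both geometric.

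Next I would check $v^P$ is in the NILSS feasible set, i.e.\ $v^P - v' \in V^+$ (recall span$(w_1,\dots,w_m)=V^+$, so membership in $V^+$ is exactly feasibility). Subtracting the two series, the forward sums telescope: $v^P_k - v'_k = \sum_{0\le n\le k-1} f_*^n(X^-_{k-n} - X_{k-n}) - \sum_{n\le -1} f_*^n X^+_{k-n} = -\sum_{0\le n\le k-1} f_*^n X^+_{k-n} - \sum_{n\le -1} f_*^n X^+_{k-n} = -\sum_{n\le k-1} f_*^n X^+_{k-n}$. Every term $f_*^n X^+_{k-n}$ lies in $V^+$ (the unstable subspace is $f_*$-invariant in both time directions), and the sum converges by the unstable-contraction-in-backward-time bound, so $v^P_k - v'_k \in V^+$ for each $k$, as required. (One should note this also matches the stated general fact that differences of two inhomogeneous tangent solutions solve the homogeneous equation~\eqref{e:homo_tangent_diffeo}, and here that homogeneous solution happens to live entirely in $V^+$.)

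Finally, for $e^N = v^N - v$, $e^P = v^P - v$, $e^{PN} = v^P - v^N$: each is a difference of two solutions of the inhomogeneous equation~\eqref{e:inhomo_tangent_diffeo} — $v^N$ is feasible for NILSS hence of the form $v' + \sum_j w_j a_j$, which is inhomogeneous since $v'$ is and each $w_j$ is homogeneous — so subtracting~\eqref{e:inhomo_tangent_diffeo} for the two solutions cancels the common inhomogeneous term $X_{k+1}$ and leaves $e_{k+1} = f_* e_k$, which is~\eqref{e:homo_tangent_diffeo}. I do not anticipate a genuine obstacle here; the lemma is foundational bookkeeping. The only point requiring a little care is making the reindexing of the doubly-infinite (forward-and-backward) sums rigorous — one must justify splitting and shifting the series, which is licensed by absolute convergence from~\eqref{e:tangent decay}, and confirm that $v^N$ really is an inhomogeneous solution, which rests on the NILSS parameterization $v = v' + \sum_j w_j a_j$ in~\eqref{e:nilss} together with $v'$ being inhomogeneous and the $w_j$ homogeneous.
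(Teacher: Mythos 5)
Your overall route is the same as the paper's: verify the inhomogeneous tangent equation for $v$, $v'$, $v^P$ by reindexing the defining series, read off $v'_0=0$ and $v^P-v'\in V^+$ directly from the definitions, and obtain the homogeneity of $e^N$, $e^P$, $e^{PN}$ as differences of inhomogeneous solutions (you additionally spell out that $v^N=v'+\sum_j w_ja_j$ is itself an inhomogeneous solution, which the paper leaves implicit). Your expression $v^P_k-v'_k=-\sum_{n\le k-1}f_*^nX^+_{k-n}$ is in fact the complete one (the paper's display keeps only the $n\le-1$ part), and the conclusion $v^P_k-v'_k\in V^+_k$ is unchanged.

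One step is misstated, however: for $v$ and $v^P$ you claim that the backward sum $-\sum_{n\le-1}f_*^nX^+_{k-n}$ is carried by $f_*$ onto the corresponding tail at step $k+1$ ``exactly, contributing nothing to the inhomogeneous term.'' As written this cannot give equation~\eqref{e:inhomo_tangent_diffeo}: the forward sums of $v$ and $v^P$ involve only $X^-$ (a second difference from $v'$ you glossed over), so your accounting would yield $v_{k+1}=f_*v_k+X^-_{k+1}$. In fact $f_*\bigl(-\sum_{n\le-1}f_*^nX^+_{k-n}\bigr)=-\sum_{m\le0}f_*^mX^+_{k+1-m}$, which is the step-$(k+1)$ tail plus the extra term $-X^+_{k+1}$; hence the backward sum contributes $+X^+_{k+1}$ to the forcing, and this combines with the $X^-_{k+1}$ produced by the forward sum to give $X_{k+1}$, exactly as in the paper's computation for $v^P$. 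The repair is one line, and with it your argument coincides with the paper's proof.
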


\begin{proof}
  To see $v^P$ is inhomogeneous tangent, apply definitions,
  \[ \begin{split}
  v^P_{k+1} - f_*v^P_k
  &= \sum_{0\le n \le k}f_*^nX_{k+1-n}^- - \sum_{n\le -1} f_*^n X_{k+1-n}^+ 
  - \sum_{0\le n \le k-1}f_*^{n+1} X_{k-n}^- + \sum_{n\le -1} f_*^{n+1} X_{k-n}^+  \\
  &= \sum_{0\le n \le k}f_*^nX_{k+1-n}^- - \sum_{n\le -1} f_*^n X_{k+1-n}^+ 
  - \sum_{1\le l \le k}f_*^{l} X_{k+1-l}^- + \sum_{l\le 0} f_*^{l} X_{k+1-l}^+\\
  &= X_{k+1}^- +  X_{k+1}^+  = X_{k+1}.
  \end{split} \]
  Similarly we can verify that $v$, defined by equation~\eqref{e:shadowing_diffeo},
  and $v'$, are inhomogeneous tangent solutions.
  Also, by definitions, $v'_0 =0$, and 
  \[ \begin{split}
    v^P_k -v'_k =  - \sum_{n\le k-1} f_*^n X_{k-n}^+ \in V^+_k.
  \end{split} \]
  Finally, 
  $e^N, e^P$, and $e^{NP}$ are homogeneous tangent solutions,
  since they are differences between inhomogeneous tangent solutions.
\end{proof}

\subsection{Convergence of \texorpdfstring{$v^N$ to $v$}{vN to v}}
\hfill\vspace{0.1in}

In this subsection, we show that the result of nonintrusive shadowing, $v^N$,
converges to the true shadowing direction, $v$, as the trajectory length $K\rightarrow\infty$.
We also show the difference between the shadowing contributions computed by $v$ and $v^N$
converges to zero.
More specifically, 
we will bound  $e^{PN}\in V^+$ at the last step of the trajectory,
and bound  $e^P\in V^-$ at the first step.
Then, due to the exponential decay of unstable and stable vectors,
$e^N = e^{PN} + e^{P}$ converges to zero at the middle part of the trajectory,
and the averaged error, $\tilde e^N$, also converges to zero as $K\rightarrow \infty$.
We shall first prove the convergence for chosen $\Phi$ and $X$,
then we give a quantitative bound on $\|\tilde e^N\|$ 
for $\Phi$ and $X$ distributed according to assumption~\ref{a1}.

\begin{lemma}  \label{l:ddddd}
$e^{PN}\in V^+$, $e^{P}\in V^-$, and their peak values are bounded by
  \[ \begin{split}
  |e^{PN}_{K-1}|
  \le C_1 \sum_{k=0}^{K-1} \lambda^{K-1-k} v^A_k 
  \,,\quad
  |e_0^P| \le v_0^A.
  \end{split} \]
\end{lemma}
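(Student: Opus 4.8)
The plan is to bound $e^P_0$ and $e^{PN}_{K-1}$ separately, both in terms of the covariant majorant $v^A$, and in each case the key is to identify the relevant error as a sum of pure unstable or pure stable tails and then compare term by term with the corresponding pieces of $v$ and $v^A$.

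First I would handle $e^P_0 = v^P_0 - v_0$. Using the two explicit formulas, the positive-$n$ stable sums differ only in that $v^P$ truncates at $n \le k-1$ while $v$ runs over all $n \ge 0$; at $k=0$ the pivot's stable sum is empty, so $v^P_0 = -\sum_{n\le -1} f_*^n X_{-n}^+$, which is exactly the unstable part shared with $v_0$. Hence $e^P_0 = v^P_0 - v_0 = -\sum_{n \ge 0} f_*^n X_{-n}^-$, a pure stable tail; it is a homogeneous stable tangent solution by Lemma~\ref{l:homoinhomo}. By the triangle inequality $|e^P_0| \le \sum_{n \ge 0} |f_*^n X_{-n}^-|$, and this is precisely the \emph{positive} part of the defining expression \eqref{e:dangd} for $v^A_0$. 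Since $v^A_0$ is that positive (stable) sum \emph{minus} the nonnegative unstable sum, we get the strict inequality $|e^P_0| \le \sum_{n\ge 0}|f_*^n X_{-n}^-| = v^A_0 + \sum_{n\le -1}|f_*^n X_{-n}^+|$, so in particular $|e^P_0| < v^A_0$ as soon as the unstable tail is nonzero (with the convention that the bound is understood in the stated sense; a harmless $<$ that could be relaxed to $\le$). I would state it with $<$ to match the claim and note the degenerate case.

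Next I would handle $e^{PN}_{K-1}$, the harder piece. Here $e^{PN} = v^P - v^N \in V^+$ is a homogeneous \emph{unstable} tangent solution by Lemma~\ref{l:homoinhomo}, and $v^N$ is the NILSS minimizer whose feasible set, under the running assumption $\spanof(w_1,\dots,w_m) = V^+$, is exactly $\{v^P + z : z \in V^+\}$ (the whole unstable-shifted affine space, by Lemma~\ref{l:homoinhomo} and the span assumption). So $v^N = v^P + z^\ast$ for the $z^\ast \in V^+$ minimizing $\sum_{k=0}^{K-1}|v^P_k + z_k|^2$. The natural competitor to bound the minimizer is $z = -$(the unstable tangent solution agreeing with $-v^P_{K-1}$ at step $K-1$), which kills $v^N_{K-1}$; but cleaner is to argue directly on $e^{PN}$. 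Since $e^{PN} \in V^+$, running it backward from $K-1$ contracts: $|e^{PN}_k| \le C_1 \lambda^{K-1-k}|e^{PN}_{K-1}|$ — wait, that is the wrong direction for what I want. Instead: I would use optimality of $v^N$ to show $\sum_k |v^N_k|^2 \le \sum_k |v^P_k|^2 \le \sum_k (v^A_k)^2$ (the last step because $|v^P_k| \le v^A_k$ entrywise-in-norm, which itself follows by comparing the truncated stable sum in $v^P_k$ with the full nonnegative stable sum in $v^A_k$ and the identical unstable sums — both have the same unstable part, and $v^A$'s stable part dominates). Then, since $e^{PN}_k = v^P_k - v^N_k$ and both $v^P$ and $v^N$ are controlled, $|e^{PN}_k| \le |v^P_k| + |v^N_k|$. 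To get the specific bound $|e^{PN}_{K-1}| \le \sum_{k=0}^{K-1}\lambda^{K-1-k} v^A_k$, I expect the right route is: the forward propagation of a stable vector decays, so write $v^P_{K-1}$ and $v^N_{K-1}$ by pushing their step-$0$ (or earlier) data forward; more precisely, decompose $v^N - v^P = e^{PN}$ into its step-$K-1$ value propagated backward, use $|e^{PN}_k| = |f_*^{k-(K-1)} e^{PN}_{K-1}|$ and unstable contraction backward to get $|e^{PN}_k| \le C_1 \lambda^{K-1-k}|e^{PN}_{K-1}|$, plug into the $l^2$ optimality inequality $\sum_k|v^N_k|^2 \le \sum_k |v^P_k|^2$, expand $v^N = v^P - e^{PN}$, and solve the resulting quadratic inequality in $|e^{PN}_{K-1}|$; the cross term produces $\sum_k \lambda^{K-1-k}|v^P_k| \le \sum_k \lambda^{K-1-k} v^A_k$.

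\textbf{Main obstacle.} The delicate step is the $e^{PN}_{K-1}$ bound: turning the global $l^2$-optimality of the NILSS minimizer into a pointwise bound on the endpoint value of the error, with the clean geometric-weighted form $\sum_k \lambda^{K-1-k} v^A_k$ and no stray $C_1$ or constant. I expect this to require carefully exploiting that $e^{PN} \in V^+$ decays backward from $K-1$ at rate $\lambda$ (so its $l^2$ mass over $[0,K-1]$ is comparable to $|e^{PN}_{K-1}|^2$ times a convergent geometric factor), combining it with $\sum_k |v^N_k|^2 \le \sum_k |v^P_k|^2$ via the polarization identity, and bounding the resulting linear term $\langle e^{PN}, v^P\rangle_{l^2}$ by $\sum_k \lambda^{K-1-k} v^A_k \cdot |e^{PN}_{K-1}|$ using both the backward decay of $e^{PN}$ and $|v^P_k| \le v^A_k$; after cancelling one factor of $|e^{PN}_{K-1}|$ the stated inequality should drop out. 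Getting the constants to collapse to exactly this form (rather than something with $C_1$) is where I would be most careful, and may rely on the convention $C_1 = 1$ being implicitly available for the unstable/stable splitting used to \emph{define} $v^A$, or on the bound being meant as an order-of-magnitude estimate that is then fed into the averaging argument for $\tilde e^N$.
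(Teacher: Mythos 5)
Your overall strategy is the same as the paper's --- exploit optimality of $v^N$ over the unstable feasible directions, use that the unstable error $e^{PN}$ peaks at $k=K-1$ and contracts backward, bound $|v^P_k|$ by $v^A_k$, and cancel one factor of $|e^{PN}_{K-1}|$ --- but the optimality step as you propose it does not yield the stated bound. Comparing the minimizer with the competitor $v^P$ gives $\sum_k|v^N_k|^2\le\sum_k|v^P_k|^2$, and expanding $v^N=v^P-e^{PN}$ then gives only $\sum_k|e^{PN}_k|^2\le 2\ip{e^{PN}_k,v^P_k}_K$, hence $|e^{PN}_{K-1}|\le 2\sum_k\lambda^{K-1-k}v^A_k$, a factor $2$ weaker than the lemma. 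The paper instead uses the first-order optimality (normal-equation) condition: $\sum_{k}\ip{v^N_k,w_k}=0$ for every homogeneous $w\in V^+$; taking $w=e^{PN}$ (legitimate because $v^P-v'\in V^+$ and $v^N-v'\in V^+$, so $e^{PN}\in V^+$) yields the exact identity $\sum_k|e^{PN}_k|^2=\sum_k\ip{e^{PN}_k,v^P_k}$ with no factor $2$; then $|e^{PN}_{K-1}|^2\le\sum_k|e^{PN}_k|^2$, Cauchy--Schwarz, the backward contraction $|e^{PN}_k|\le\lambda^{K-1-k}|e^{PN}_{K-1}|$, and $|v^P_k|\le v^A_k$ finish the argument. (Your worry about $C_1$ is legitimate: the paper silently takes $C_1=1$, i.e.\ an adapted norm; that looseness is the paper's, not yours.)

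Second, your treatment of $|e^P_0|$ is internally inconsistent. You correctly identify $e^P_0$ as the pure stable tail and get $|e^P_0|\le\sum_{n\ge0}|f_*^nX^-_{-n}|$, but if the minus sign in equation~\eqref{e:dangd} is taken literally, your displayed identity reads $|e^P_0|\le v^A_0+\sum_{n\le-1}|f_*^nX^+_{-n}|$, which is an inequality in the wrong direction and does not give $|e^P_0|<v^A_0$; ``as soon as the unstable tail is nonzero'' is precisely when that conclusion fails, not when it holds. The resolution is that the minus in the definition of $v^A$ must be read as a plus (a sum of norms): that reading is also what your other claim $|v^P_k|\le v^A_k$ requires (triangle inequality, with the truncated stable sum dominated by the full one), and it is what the paper needs in its own proof. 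With it, $|e^P_0|\le\sum_{n\ge0}|f_*^nX^-_{-n}|\le v^A_0$ is immediate, which is exactly the paper's one-line argument for the second inequality.
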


\begin{remark*}
The main idea for bounding $e^{PN}\in V^+$ is that 
the unstable homogeneous tangent solution has a spike at $(K-1)$-th step,
hence $e^{PN}$ can not to be too large without increasing the $l^2$ norm,
$|v^N|_K:=(\sum_{k=0}^{K-1} |v_k|^2 )^{0.5}$.
Hence minimizing $|v^N|_{K}$ controls $e^{PN}$.
Here the `large spike' is encoded into the relation $|e^{PN}|_{K}\approx |e^{PN}_{K-1}|$.
\end{remark*}

\begin{proof}
By definitions $v^{P}-v'\in V^+$ and  $v^N-v'\in V^+$, hence 
\[ \begin{split}
  e^{PN} := v^P-v^N\in V^+.
\end{split} \]

Since $|v^N|_K$ is minimized in nonintrusive shadowing,
for any $w\in V^+$, $|v^N+\alpha w|_K^2$ is minimal at $\alpha=0$.
Differentiate with respect to $\alpha$,
we have the so-called first-order optimality condition,
\begin{equation} \begin{split} \label{e:so}
  \ip{v^N, w}_K :=\sum_{k=0}^{K-1} \ip{v^N_k, w_k}   =0 ,\quad \textnormal{ for all } w\in V^+.
\end{split} \end{equation}
Substitute $w=e^{PN}$ and $v^N=v^P-e^{PN}$ into equation~\eqref{e:so}, we have
\[ \begin{split}
  \ip{v^P - e^{PN}, e^{PN}}_K  =0 
  \quad\Rightarrow \quad
  \ip{ e^{PN}, e^{PN}}_K = \ip{ e^{PN}, v^P}_K .
\end{split} \]
The peak value of $e^{PN}$ is at step $K-1$, 
which is smaller than its $l^2$ norm, hence
\[ \begin{split}
  |e^{PN}_{K-1}|^2
  \le \ip{ e^{PN}, e^{PN}}_{K} 
  = \ip{ e^{PN}, v^P}_K.
\end{split} \]
Apply Cautchy-Schwarz and the exponential decay of $e^{PN}$, we have
\[ \begin{split}
  |e^{PN}_{K-1}|^2
  \le \ip{ e^{PN}, v^P}_K
  \le \sum_{k=0}^{K-1} |e^{PN}_k| |v^P_k|
  \le  C_1 \sum_{k=0}^{K-1} \lambda^{K-1-k} |e^{PN}_{K-1}| |v^P_k|.
\end{split} \]
Cancel $|e^{PN}_{K-1}|$ from both sides, we get
\[ \begin{split}
  |e^{PN}_{K-1}|
  \le C_1 \sum_{k=0}^{K-1} \lambda^{K-1-k} |v^P_k| 
  \le C_1 \sum_{k=0}^{K-1} \lambda^{K-1-k} v^A_k.
\end{split} \]

To prove the statements about $e^P$,
notice that by definition, 
\[ \begin{split}
  e^P_k 
  = \sum_{n \ge k}f_*^nX_{k-n}^- \in V^-_k
  \,.
\end{split} \]
The second inequality in the lemma is due to the definition of $v^A$.
\end{proof}

\begin{lemma} [convergence of $v^N$ to $v$]
For chosen $X$ and $\Phi$,
\[ \begin{split}
  |e^N_k| \le  \frac{4C_1^3 |X|_{max} \lambda^{\min(K-1-k,k)}}{(1-\lambda)^2 \sin\alpha}
  ,\quad
  |\tilde e^N | 
  \le \frac {|\Phi_u|_{max}} K \sum _{k=0}^{K-1}  |e^N_k|
  \le \frac {4 C_1^3 |X|_{max}|\Phi_u|_{max} } {K(1-\lambda)^3 \sin \alpha} ,
\end{split} \]
where $|\cdot|_{max}$ is the largest vector 2-norm on the attractor.
\end{lemma}

\begin{remark*}
(1)
The first inequality means the convergence of $v^N$ happens at the middle part
of the trajectory; its error at either end of the trajectory does not shrink with $K$.
The good news is that the $l^1$ norm of $e^N$ also does not increase with $K$.
This bound on $e^N$ is useful when nonintrusive shadowing
is used for computing only the shadowing direction but not the shadowing contribution,
for example, when computing the modified shadowing direction in the fast linear response algorithm
\cite{flr}.
(2)
Previous shadowing methods have the same $O(K^{-1})$ convergence speed for $\tilde e^N$
\cite{wang2014convergence,Blonigan_MSS};
hence, the nonintrusive formulation reduces the computation with no additional error.
Also, the convergence to the linear response in previous shadowing methods' literature was wrong,
it should be convergence to the shadowing contribution.
\end{remark*}

\begin{proof}
We first bound $v^A$. Use its definition in equation~\eqref{e:dangd},
\begin{equation} \begin{split} \label{e:dang}
  |v^A|
  \le \sum_{0\le n} |f_*^nX_{-n}^-| + \sum_{n\le -1} |f_*^n X_{-n}^+ | 
  \le C_1 \sum_{0\le n} \lambda^n |X_{-n}^-| + C_1 \sum_{n\le -1} \lambda^{-n} | X_{-n}^+ |.
\end{split} \end{equation}
Bound $|X|$ by its maximum, we have
\[ \begin{split}
  &|v^A|
  \le \frac {2C_1|X|_{max}} {(1-\lambda)\sin \alpha}.
\end{split} \]

Since $e^N=e^{PN}+e^P$, where $e^{PN}\in V^+$, $e^P\in V^-$,
and by lemma~\ref{l:ddddd}, we have
\[ \begin{split}
  |e^N_k|
  &\le |e^{PN}_k| + |e^{P}_k|
  \le C_1 (\lambda^{K-1-k} |e^{PN}_{K-1}| + \lambda^k |e^{P}_0| ) 
  \le C_1 \lambda^{\min(K-1-k,k)} (|e^{PN}_{K-1}| + |e^{P}_0| ) \\
  &\le  \frac{2C_1^2 \lambda^{\min(K-1-k,k)}}{(1-\lambda)} |v^{A}|_{max} 
  \le  \frac{4C_1^3 |X|_{max} \lambda^{\min(K-1-k,k)}}{(1-\lambda)^2 \sin\alpha}.
\end{split} \]

For $\tilde e^N$, 
\[ \begin{split}
  |\tilde e^N | \le \frac {|\Phi_u|_{max}} K \sum _{k=0}^{K-1}  |e^N_k|.
\end{split} \]
We have a slightly cleaner bound for the average of $e^N_k$,
\begin{equation} \begin{split} \label{e:guzheng}
  \frac 1 K \sum _{k=0}^{K-1}  |e^N_k|
  \le \frac 1 K \sum _{k=0}^{K-1}  |e^{PN}_k| + |e^{P}_k|
  \le \frac {C_1}K \sum _{k=0}^{K-1} \lambda^{K-1-k} |e^{PN}_{K-1}| + \lambda^k |e^{P}_0| \\
  \le \frac {C_1}{K(1-\lambda)} ( |e^{PN}_{K-1}| + |e^{P}_0| )
  \le \frac {C_1}{K(1-\lambda)} \left(C_1 \sum_{k=0}^{K-1} \lambda^{K-1-k} | v^A_k |
  + |v_0^A| \right)
\end{split} \end{equation}
The lemma is proved by substituting the bound for $v^A$.
\end{proof}

\begin{theorem} [convergence of $v^N$ to $v$]
\label{t:convergeNI}
For the distribution of $\Phi$ and $X$ given in assumption~\ref{a1}, 
\[ \begin{split}
  \frac {\| \tilde e^N \|} {\| \Phi_uX \|}
  \le \frac 1{K\|X\|} \sum _{k=0}^{K-1} \|e^N_k\|
  \le \frac{4 C_1^3}{K(1-\lambda)^3 \sin \alpha} .
\end{split} \]
\end{theorem}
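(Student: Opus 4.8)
The plan is to chain together three estimates: the pointwise decay of the homogeneous tangent errors $e^{PN}\in V^+$ and $e^P\in V^-$, the bounds on their peak values from Lemma~\ref{l:ddddd}, and a statistical estimate of $\|v^A_k\|$ under Assumption~\ref{a1}. First I would establish the first inequality, which is purely deterministic: since $\tilde e^N = \frac1K\sum_k \ip{e^N_k,J_{uk}}$, Jensen (equation~\eqref{e:jensen}) followed by Cauchy--Schwarz gives $\|\tilde e^N\| \le \frac1K\sum_k \|\,|e^N_k|\,|J_{uk}|\,\|$, and because $J_u(u_k)$ is independent of $e^N_k$ (which is built from $X$-data only) with $\E|J_{uk}|^2 = M$ by Assumption~\ref{a1}, this factors as $\frac1K\sum_k \|e^N_k\|\sqrt M = \|J_uX\|\cdot\frac1{K\|X\|}\sum_k\|e^N_k\|$, using $\|J_uX\| = \|X\|\sqrt M = M$ from equation~\eqref{e:bet}. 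That is the middle quantity; the work is in the last inequality.

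Next I would bound $\sum_k \|e^N_k\|$. Write $e^N = e^{PN} + e^P$, so $\|e^N_k\| \le \|e^{PN}_k\| + \|e^P_k\|$. For $e^{PN}\in V^+$, exponential decay backward from the spike at $K-1$ gives $|e^{PN}_k| \le C_1\lambda^{K-1-k}|e^{PN}_{K-1}|$ — though I should check whether the paper's normalization lets me take $C_1=1$ here, as the statement's clean constant suggests; the bound $|e^{PN}_{K-1}|\le\sum_j\lambda^{K-1-j}v^A_j$ from Lemma~\ref{l:ddddd} then gives, after summing the double geometric series over $k$, something like $\sum_k\|e^{PN}_k\| \lesssim \frac1{(1-\lambda)^2}\sum_j \|v^A_j\|$. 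For $e^P\in V^-$, forward decay from $e^P_0$ with $|e^P_0| < v^A_0$ gives $\sum_k\|e^P_k\| \le \frac1{1-\lambda}\|v^A_0\|$, and by covariance $\|v^A_k\|=\|v^A_0\|$ for all $k$. So everything reduces to estimating the single number $\|v^A_0\|$.

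Finally I would estimate $\|v^A_0\|$ using Assumption~\ref{a1}. From equation~\eqref{e:dangd}, $v^A_0$ is a sum over $n$ of terms $\pm|f_*^n X_{-n}^\pm|$. Each $X(u_{-n})$ is $\N(0,I_M)$; projecting onto the (un)stable subspace and using that the projection norm is controlled by $1/\sin\alpha$ (as in Lemma~\ref{t:JuX+}), then applying $f_*^n$ which contracts by $\lambda^{|n|}$ in the appropriate direction, gives $\E|f_*^nX_{-n}^\pm|^2 \le \lambda^{2|n|}\cdot(\text{something})/\sin^2\alpha$. The subtlety is that $v^A_0$ is a \emph{sum} of these, and the cross terms need handling — but the $X_{-n}$ at different times are along a fixed trajectory, so I would use the triangle inequality in $L^2$: $\|v^A_0\| \le \sum_n \|f_*^nX_{-n}^\pm\| \le \sum_n \lambda^{|n|}\sqrt{?}/\sin\alpha$, a geometric series summing to roughly $\frac1{(1-\lambda)\sin\alpha}$ times $\sqrt M$ or $\sqrt m$. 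Carrying the $\sqrt M$ (or $\sqrt m \le \sqrt M$) through and dividing by $\|X\| = \sqrt M$ in the middle expression makes the dimension cancel. Collecting the powers of $(1-\lambda)$ — one from the $e^P$ sum or the $v^A$ sum, two more from the $e^{PN}$ double sum — yields the $(1-\lambda)^{-3}$, and tracking the numerical constants (the projection bound, the $C_1$'s, the geometric-series prefactors) should give the clean $4$.

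The main obstacle I expect is the bookkeeping in the second paragraph: getting the iterated geometric sums for $\sum_k\|e^{PN}_k\|$ to collapse to exactly $(1-\lambda)^{-3}$ with constant $4$ requires being careful about whether $C_1$ appears (and whether the hypotheses implicitly normalize it to $1$), about the off-by-one in the spike location $K-1$, and about not double-counting the decay already used inside Lemma~\ref{l:ddddd}. The statistical step is conceptually the cleanest once one commits to the crude $L^2$-triangle-inequality bound on $v^A_0$ rather than trying to exploit cancellation among the $\pm$ terms.
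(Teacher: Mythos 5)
Your overall architecture --- split $e^N = e^{PN}+e^P$, invoke Lemma~\ref{l:ddddd}, use the exponential decay of $e^{PN}\in V^+$ and $e^P\in V^-$ together with covariance of $v^A$ to reduce everything to the single number $\|v^A_0\|$, and then bound $\|v^A_0\|\le 2\|X\|/((1-\lambda)\sin\alpha)$ --- is the same as the paper's, and the paper does indeed work as if $C_1=1$ in these decay estimates, so that worry resolves the way you guessed. The genuine gap is in your first paragraph. Cauchy--Schwarz in the form $|\ip{e^N_k, J_{uk}}|\le |e^N_k|\,|J_{uk}|$ costs a factor $\sqrt M$: it only yields $\|\tilde e^N\|\le \frac{\sqrt M}{K}\sum_k\|e^N_k\|$, and the identity you lean on, $\|J_uX\|=\|X\|\sqrt M=M$, is false --- by equation~\eqref{e:bet} the independence of $J_u$ and $X$ makes the cross terms vanish, so $\|J_uX\|=\sqrt M=\|X\|$, not $M$. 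With the correct value your chain gives $\|\tilde e^N\|/\|J_uX\|\le \frac 1K\sum_k\|e^N_k\|$, which is $\sqrt M$ times larger than the theorem's middle quantity $\frac 1{K\|X\|}\sum_k\|e^N_k\|$, and carrying that loss through the rest of your argument would end with $4\sqrt M/(K(1-\lambda)^3\sin\alpha)$ rather than the dimension-independent bound claimed. The paper's proof avoids this by conditioning on $(u_0,X)$, which determine $e^N_k$, and using rotational invariance of the Gaussian $J_{uk}$: only the one-dimensional component of $J_{uk}$ along $e^N_k$ contributes, so $\E\left(\ip{e^N_k,J_{uk}}^2 \mid u_0, X\right)=|e^N_k|^2$ exactly, with no factor of $M$; dividing by $\|J_uX\|=\|X\|$ then gives the first inequality as stated.

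A secondary bookkeeping point in your second paragraph: keep the weight $\lambda^{K-1-j}$ inside the sum before invoking covariance. As written, $\sum_k\|e^{PN}_k\|\le \frac{1}{(1-\lambda)^2}\sum_j\|v^A_j\|$ combined with $\|v^A_j\|=\|v^A_0\|$ gives $K\|v^A_0\|/(1-\lambda)^2$, which after dividing by $K$ no longer decays in $K$. The correct order of operations is $\|e^{PN}_k\|\le \lambda^{K-1-k}\|e^{PN}_{K-1}\|$ and $\|e^{PN}_{K-1}\|\le \sum_j\lambda^{K-1-j}\|v^A_j\|=\|v^A_0\|\sum_j\lambda^{K-1-j}\le \|v^A_0\|/(1-\lambda)$, so that $\frac 1K\sum_k\|e^{PN}_k\|\le \|v^A_0\|/\bigl(K(1-\lambda)^2\bigr)$, which retains the essential $1/K$ convergence rate; the analogous one-sided estimate handles $e^P$ with one fewer power of $(1-\lambda)$.
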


\begin{proof}
By definition,
\[ \begin{split}
  \| \tilde e^N \| 
  \le \frac 1K \sum _{k=0}^{K-1} \| \Phi_{uk} e_k^N \|
  = \frac 1K \sum _{k=0}^{K-1} 
  \left [\E \E \left((\Phi_{uk} e_k^N) ^2 \mdd
  \{u_k\}_{ k=0 }^K, \{ X_k \}_{ k=0 }^K\right)\right]^{0.5}.
\end{split} \]
Here $e^N$ is determined by $\{ X_k \}_{ k=0 }^K$ and $\{u_k\}_{ k=0 }^K$.
\nax{
This is the place where we use the full strength of the independence condition
in assumption~\ref{a1};
that is, conditioned on the entire sequence of $u$ and $X$, 
$\Phi_{uk}$ is still a multi-variate Gaussian.
}

We choose a coordinate whose first axis is parallel to $e_k^N$,
then $\Phi_{uk}$ is still multi-variate Gaussian in this new coordinate.
In particular, its first coordinate, $\Phi_{uk}^1\sim \N(0, 1)$,
whereas other coordinate components are orthogonal to $e_k^N$.
Hence,
\begin{equation} \begin{split} \label{e:ohoh}
  \E \left((\Phi_{uk} e_k^N) ^2  \mdd \{u_k\}_{ k=0 }^K, \{ X_k \}_{ k=0 }^K\right)
  = \E \left(|e_k^N|^2 (\Phi^1_{uk})^2 \mdd \cdots \right)
  = |e_k^N|^2 \E (\Phi^1_{uk})^2
  = |e_k^N|^2. 
\end{split} \end{equation}
By substitution,
\[ \begin{split}
  \| \tilde e^N \| 
  \le \frac 1K \sum _{k=0}^{K-1} \left(\E |e_k^N|^2 \right)^{0.5}
  = \frac 1K \sum _{k=0}^{K-1}  \|e^N_k\| .
\end{split} \]

Replace $|\cdot|$ by $\|\cdot\|$ in equation~\eqref{e:guzheng}.
Then, notice that $v^A$ is equivariant, 
so $\rho(v^A_k) = \rho(v^A_0)$, hence $\| v^A_k\| = \|v^A_0\|$, and
\[ \begin{split}
  &\| \tilde e^N \| \le
  \frac 1K \sum _{k=0}^{K-1}  \|e^N_k\|
  \le \frac {C_1}{K(1-\lambda)} ( \|e^{PN}_{K-1}\| + \|e^{P}_0\| )
  \le \frac {2C_1^2}{K(1-\lambda)^2} \|v_0^A\|.
\end{split} \]

To bound $\|v^A\|$,
notice that $X^-_{-n}(\cdot) := X^-\circ f^{-n}(\cdot)$, 
that is, $X_n^-$ is equivariant,
hence $\|X_{-n}^-\| = \|X^-\|$;
similarly, $\|X_{-n}^+\| = \|X^+\|$.
Replace $|\cdot|$ by $\|\cdot\|$ in equation~\eqref{e:dang},
we have
\begin{equation} \begin{split} \label{e:bababa}
  \|v^A_0\| 
  \le \frac {C_1( \|X^-\|+\|X^+\| )} {1-\lambda}
  \le \frac {2C_1\|X\|} {(1-\lambda)\sin \alpha}
\end{split} \end{equation}
Under assumption~\ref{a1}, $\|X\|=\sqrt M$, hence
\[ \begin{split}
  \| \tilde e^N \| 
  \le \frac 1K \sum _{k=0}^{K-1}  \|e^N_k\|
  \le \frac {2 C_1^2 \|v^A_0\| }{K(1-\lambda)^2}
  \le \frac {4 C_1^3 \|X\|} {K(1-\lambda)^3\sin \alpha}
  = \frac {4 C_1^3 \sqrt M} {K(1-\lambda)^3\sin \alpha}
\end{split} \]
By \nax{ equation~\eqref{e:bet} in} lemma~\ref{t:JuX+},
$\|\Phi_uX\|=\|X\| =\sqrt M$, hence this theorem is proved.
\end{proof}

\subsection{Sampling error on finite trajectories}
\label{s:sample}
\hfill\vspace{0.1in}

Even with the true shadowing direction, $v$,
computing the shadowing contribution on a finite trajectory subjects to sampling error.
This is the other error in computing the shadowing contribution.
After bounding this error in this subsection, 
we can finally bound the total error of computing the linear response by the nonintrusive shadowing algorithm on a finite trajectory.

For chosen $\Phi$ and $X$, the sampling error for taking average on a finite trajectory is 
\begin{equation} \begin{split} \label{e:es}
  \tilde e^S := \rho(\Phi_u v) - \frac 1K \sum _{k=0} ^{K-1} \Phi_{uk} v_k.
\end{split} \end{equation}
By proposition 1.2(c) in \cite{Ruelle_diff_maps},
$v=\delta j$ is Holder continuous.
Hence, by decay of correlations in uniform hyperbolic systems \cite{Chazottes2015},
we know that for any fixed $\Phi$ and $X$, $(\rho(\tilde e^S)^2)^{0.5}=O(K^{-0.5})$.
With assumption~\ref{a1}, and further with a statistical assumption on the decay of correlation of $\Phi_u v$,
we can give a quantitative bound on $\|\tilde e^S\|$.

\begin{assumption}\label{a3}
For the entire distribution of $\Phi$ and $X$,
there are uniform constants $C_3\ge 1, 0<\kappa_3<1$, such that
\[ \begin{split}
  \cor_{\Phi_u v}(n) := 
  \left|\rho((\Phi_u v\circ f^n ) \Phi_u v) -(\rho(\Phi_uv))^2 \right|
  \le C_3 \kappa_3^n \rho((\Phi_u v)^2).
\end{split} \]
\end{assumption}

\begin{remark*}
  Here $\rho((\Phi_u v)^2)$ is a bound for $\cor_{\Phi_u v}(0)$.
\end{remark*}

\begin{theorem} [convergence of sampling error] \label{t:sample error}
Under assumption~\ref{a1} and~\ref{a3},
\[ \begin{split}
  \frac{\|\tilde e^S\|}{\|\Phi_u X\|}
  \le \sqrt \frac {2C_3} {K(1-\kappa_3)} \frac {2C_1} {(1-\lambda)\sin \alpha}
  .
\end{split} \]
\end{theorem}

\begin{proof}
By some simple algebra, and that $\rho(\Phi_{uk}v_k) = \rho(\Phi_uv)$, we have
\[ \begin{split}
  \|\tilde e^S\|^2
  = \E \rho \left( \Big(\rho(\Phi_u v) - \frac 1K \sum _{k=0} ^{K-1} \Phi_{uk} v_k\Big)^2 \right)
  = \frac 1K \E \rho \left(\frac 1K \Big( 
    \sum _{k=0} ^{K-1} \rho(\Phi_u v) - \Phi_{uk} v_k\Big)^2 \right) \\
  = \frac 1K \E \left( 
    \cor_{\Phi_u v}(0) + 2\sum _{l=1} ^{K-1} \frac {K-l} K \cor_{\Phi_u v}(l) \right) 
  \le \frac 2K \E \left( \sum _{l=0} ^{K-1} \cor_{\Phi_u v}(l) \right) \\
  \le \frac {2C_3} {K(1-\kappa_3)} \E \rho \left( (\Phi_u v)^2 \right) 
  = \frac {2C_3} {K(1-\kappa_3)} \E \left( (\Phi_u v)^2 \right) ,
\end{split} \]
where all but the last $\E$ are averaging with respect to the distribution of $X$ and $\Phi$;
the last $\E$ further averages over $\rho$.
Then, by the same arguments as in equation~\eqref{e:ohoh},
because $\{u_k\}_{ k\in\Z }$ and $\{ X_k \}_{ k\in\Z }$ completely determine $v$, we have
\[ \begin{split}
  \E \left( (\Phi_u v)^2 \right) 
  = \E \E \left( (\Phi_u v)^2 \mdd \{u_k\}_{ k\in\Z }, \{ X_k \}_{ k\in\Z } \right) 
  = \E \left( |v|^2 \right) 
  \le \E \left( (v^A)^2 \right) .
\end{split} \]
Hence, 
\[ \begin{split}
  \|\tilde e^S\|
  \le \sqrt \frac {2C_3} {K(1-\kappa_3)} \|v^A\|
\end{split} \]
Then apply equation~\eqref{e:bababa} and~\eqref{e:bet} to prove the theorem.
\end{proof}

Now all errors in the nonintrusive shadowing algorithm have been analyzed.

\begin{theorem} 
The error of approximating the linear response by nonintrusive shadowing,
on a finite trajectory, under assumption~\ref{a1}, \ref{a2}, and~\ref{a3}, 
is bounded by 
\[ \begin{split}
  \| \delta \Phi_{avg} - \frac 1K \sum _{k=0} ^{K-1} \Phi_{uk} v^N_k\|
  \le \| \delta \mu(\Phi)\| + \|\tilde e^S\| + \|\tilde e^N\|. 
\end{split} \]
where the bounds of the three terms are given in 
theorem~\ref{t:approximation},~\ref{t:sample error}, and~\ref{t:convergeNI}, respectively.
\end{theorem}

\begin{remark*}
  Note that $\|\tilde e^S\|$ and $\|\tilde e^N\|$ go to zero as $K\rightarrow \infty$,
  but $\| \delta \mu(\Phi)\|$ does not.
\end{remark*} 

\begin{proof}
By triangular inequality,
\[ \begin{split}
  &\| \delta \Phi_{avg} - \frac 1K \sum _{k=0} ^{K-1} \Phi_{uk} v^N_k\| \\
  \le& \| \delta \Phi_{avg} - \rho(\Phi_u v) \| 
    + \| \rho(\Phi_u v) - \frac 1K \sum _{k=0} ^{K-1} \Phi_{uk} v_k\|
    + \| \frac 1K \sum _{k=0} ^{K-1} \Phi_{uk} v_k- \frac 1K \sum _{k=0} ^{K-1} \Phi_{uk} v^N_k\| \\
  =& \| \delta \mu(\Phi)\| + \|\tilde e^S\| + \|\tilde e^N\|. 
\end{split} \]
\end{proof}

\section{Conclusions}

For engineering applications, 
when computing derivatives of averaged objectives with respect to system parameters,
especially for dissipative systems with large degrees of freedom, such as computational fluids,
we suggest to first try the nonintrusive shadowing algorithm.
For many previous applications, nonintrusive shadowing can be quite useful even without correction.
When the unstable contribution is large, or when better accuracy is desired, for example near design optimal \cite{RepolhoCagliari2021,Lasagna2019,BloniganPhdThesis},
there are several choices for further computing the systematic error of shadowing, which is the unstable contribution.  
In particular, we can add adjoint solvers, then use the little-intrusive correction given in this paper;
another choice is to add a second-order inhomogeneous term to tangent equations, 
then use the fast linear response algorithm in \cite{flr}.

\section*{Acknowledgements}

The author gratefully thanks Miaohua Jiang and David Ruelle for discussions on the linear response, and also Adam Sliwiak for very helpful discussions.
This research is supported by the China Postdoctoral Science Foundation 2021TQ0016,
the International Postdoctoral Exchange Fellowship Program YJ20210018,
and the Richman Fellowship from the math department of UC Berkeley.

\bibliographystyle{abbrv}
{\footnotesize\bibliography{MyCollection}}

\begin{thebibliography}{10}

\bibitem{abramov2007blended}
R.~V. Abramov and A.~J. Majda.
\newblock {Blended response algorithms for linear fluctuation-dissipation for
  complex nonlinear dynamical systems}.
\newblock {\em Nonlinearity}, 20(12):2793--2821, 2007.

\bibitem{Abramov2008}
R.~V. Abramov and A.~J. Majda.
\newblock {New Approximations and Tests of Linear Fluctuation-Response for
  Chaotic Nonlinear Forced-Dissipative Dynamical Systems}.
\newblock {\em Journal of Nonlinear Science}, 18(3):303--341, 2008.

\bibitem{anosov_shadow}
D.~V. Anosov.
\newblock {Geodesic flows on closed Riemannian manifolds of negative
  curvature}.
\newblock {\em Tr. Mat. Inst. Steklova}, 90:1--235, 1967.

\bibitem{Bahsoun2018}
W.~Bahsoun, S.~Galatolo, I.~Nisoli, and X.~Niu.
\newblock {A rigorous computational approach to linear response}.
\newblock {\em Nonlinearity}, 31(3):1073--1109, 2018.

\bibitem{Baladi2007}
V.~Baladi.
\newblock {Linear response, or else}.
\newblock In {\em Proceedings of the International Congress of Mathematicians
  Seoul 2014}, volume~3, pages 525--545, Seoul, Korea, 2014.

\bibitem{BloniganPhdThesis}
P.~J. Blonigan.
\newblock {\em {Least Squares Shadowing for Sensitivity Analysis of Large
  Chaotic Systems and Fluid Flows}}.
\newblock Ph.d. thesis, MIT, 2016.

\bibitem{Blonigan_2017_adjoint_NILSS}
P.~J. Blonigan.
\newblock {Adjoint sensitivity analysis of chaotic dynamical systems with
  non-intrusive least squares shadowing}.
\newblock {\em Journal of Computational Physics}, 348:803--826, 2017.

\bibitem{Blonigan2014a}
P.~J. Blonigan and Q.~Wang.
\newblock {Least squares shadowing sensitivity analysis of a modified
  Kuramoto-Sivashinsky equation}.
\newblock {\em Chaos, Solitons and Fractals}, 64(1):16--25, 2014.

\bibitem{Blonigan_MSS}
P.~J. Blonigan and Q.~Wang.
\newblock {Multiple shooting shadowing for sensitivity analysis of chaotic
  dynamical systems}.
\newblock {\em Journal of Computational Physics}, 354:447--475, 2018.

\bibitem{Bowen_shadowing}
R.~Bowen.
\newblock {Markov Partitions for Axiom A Diffeomorphisms}.
\newblock {\em American Journal of Mathematics}, 92(3):725--747, 1970.

\bibitem{srbflow}
R.~Bowen and D.~Ruelle.
\newblock {The ergodic theory of Axiom A flows}.
\newblock {\em Inventiones Mathematicae}, 29(3):181--202, 1975.

\bibitem{Casati1982}
G.~Casati, G.~Comparin, and I.~Guarneri.
\newblock {Decay of correlations in certain hyperbolic systems}.
\newblock {\em Physical Review A}, 26(1):717--719, 1982.

\bibitem{Chandramoorthy_ensemble_adjoint}
N.~Chandramoorthy, P.~Fernandez, C.~Talnikar, and Q.~Wang.
\newblock {An Analysis of the Ensemble Adjoint Approach to Sensitivity Analysis
  in Chaotic Systems}.
\newblock In {\em 23rd AIAA Computational Fluid Dynamics Conference, AIAA
  AVIATION Forum, AIAA 2017-3799}, pages 1--11, 2017.

\bibitem{Chandramoorthy2018_nilss_ad}
N.~Chandramoorthy, Q.~Wang, L.~Magri, S.~H.~K. Narayanan, and P.~Hovland.
\newblock {Algorithmic differentiation of shadowing sensitivities in chaotic
  systems}.
\newblock In {\em the SIAM Workshop on Combinatorial Scientific Computing},
  pages 1--18, Bergen, Norway, 2018.

\bibitem{Chater_convergence_LSS}
M.~Chater, A.~Ni, P.~J. Blonigan, and Q.~Wang.
\newblock {Least Squares Shadowing Method for Sensitivity Analysis of
  Differential Equations}.
\newblock {\em SIAM Journal on Numerical Analysis}, 55(6):3030--3046, 2017.

\bibitem{Chazottes2015}
J.-R. Chazottes.
\newblock {Fluctuations of Observables in Dynamical Systems: From Limit
  Theorems to Concentration Inequalities}.
\newblock In H.~Gonz{\'{a}}lez-Aguilar and E.~Ugalde, editors, {\em Nonlinear
  Dynamics New Directions: Theoretical Aspects}, pages 47--85. Springer, 2015.

\bibitem{Crimmins2020}
H.~Crimmins and G.~Froyland.
\newblock {Fourier approximation of the statistical properties of Anosov maps
  on tori}.
\newblock {\em Nonlinearity}, 33(11):6244--6296, 2020.

\bibitem{Dolgopyat2004}
D.~Dolgopyat.
\newblock {On differentiability of SRB states for partially hyperbolic
  systems}.
\newblock {\em Inventiones Mathematicae}, 155(2):389--449, 2004.

\bibitem{eyink2004ruelle}
G.~L. Eyink, T.~W.~N. Haine, and D.~J. Lea.
\newblock {Ruelle's linear response formula, ensemble adjoint schemes and
  L{\'{e}}vy flights}.
\newblock {\em Nonlinearity}, 17(5):1867--1889, 2004.

\bibitem{Galatolo2014}
S.~Galatolo and I.~Nisoli.
\newblock {An elementary approach to rigorous approximation of invariant
  measures}.
\newblock {\em SIAM Journal on Applied Dynamical Systems}, 13(2):958--985,
  2014.

\bibitem{Galatolo2014a}
S.~Galatolo and I.~Nisoli.
\newblock {Rigorous computation of invariant measures and fractal dimension for
  maps with contracting fibers: 2D Lorenz-like maps}.
\newblock {\em Ergodic Theory and Dynamical Systems}, 36(6):1865--1891, 2016.

\bibitem{Gallavotti1996}
G.~Gallavotti.
\newblock {Chaotic hypothesis: Onsager reciprocity and fluctuation-dissipation
  theorem}.
\newblock {\em Journal of Statistical Physics}, 84(5-6):899--925, 1996.

\bibitem{lucarini_linear_response_climate2}
A.~Gritsun and V.~Lucarini.
\newblock {Fluctuations, response, and resonances in a simple atmospheric
  model}.
\newblock {\em Physica D: Nonlinear Phenomena}, 349:62--76, 2017.

\bibitem{Gunther2017}
S.~G{\"{u}}nther, N.~R. Gauger, and Q.~Wang.
\newblock {A framework for simultaneous aerodynamic design optimization in the
  presence of chaos}.
\newblock {\em Journal of Computational Physics}, 328:387--398, 2017.

\bibitem{Jiang2012}
M.~Jiang.
\newblock {Differentiating potential functions of SRB measures on hyperbolic
  attractors}.
\newblock {\em Ergodic Theory and Dynamical Systems}, 32(4):1350--1369, 2012.

\bibitem{Lasagna2019}
D.~Lasagna, A.~Sharma, and J.~Meyers.
\newblock {Periodic shadowing sensitivity analysis of chaotic systems}.
\newblock {\em Journal of Computational Physics}, 391:119--141, 2019.

\bibitem{Lea2000}
D.~J. Lea, M.~R. Allen, and T.~W.~N. Haine.
\newblock {Sensitivity analysis of the climate of a chaotic system}.
\newblock {\em Tellus A: Dynamic Meteorology and Oceanography}, 52(5):523--532,
  2000.

\bibitem{lucarini_linear_response_climate}
V.~Lucarini, F.~Ragone, and F.~Lunkeit.
\newblock {Predicting Climate Change Using Response Theory: Global Averages and
  Spatial Patterns}.
\newblock {\em Journal of Statistical Physics}, 166(3-4):1036--1064, 2017.

\bibitem{Ni_adjoint_shadowing}
A.~Ni.
\newblock {Adjoint shadowing directions in hyperbolic systems for sensitivity
  analysis}.
\newblock {\em arXiv:1807.05568}, pages 1--23, 2018.

\bibitem{Ni_CLV_cylinder}
A.~Ni.
\newblock {Hyperbolicity, shadowing directions and sensitivity analysis of a
  turbulent three-dimensional flow}.
\newblock {\em Journal of Fluid Mechanics}, 863:644--669, 2019.

\bibitem{flr}
A.~Ni.
\newblock {Fast linear response algorithm for differentiating chaos}.
\newblock {\em arXiv:2009.00595}, pages 1--28, 2020.

\bibitem{TrsfOprt}
A.~Ni.
\newblock {On computing derivatives of transfer operators and linear responses
  in higher dimensions}.
\newblock {\em arXiv:2108.13863}, 2021.

\bibitem{Ni_nilsas}
A.~Ni and C.~Talnikar.
\newblock {Adjoint sensitivity analysis on chaotic dynamical systems by
  Non-Intrusive Least Squares Adjoint Shadowing (NILSAS)}.
\newblock {\em Journal of Computational Physics}, 395:690--709, 2019.

\bibitem{Ni_NILSS_JCP}
A.~Ni and Q.~Wang.
\newblock {Sensitivity analysis on chaotic dynamical systems by Non-Intrusive
  Least Squares Shadowing (NILSS)}.
\newblock {\em Journal of Computational Physics}, 347:56--77, 2017.

\bibitem{Ni_fdNILSS}
A.~Ni, Q.~Wang, P.~Fernandez, and C.~Talnikar.
\newblock {Sensitivity analysis on chaotic dynamical systems by Finite
  Difference Non-Intrusive Least Squares Shadowing (FD-NILSS)}.
\newblock {\em Journal of Computational Physics}, 394:615--631, 2019.

\bibitem{Pilyugin_shadow_linear_formula}
S.~Y. Pilyugin.
\newblock {Shadowing in Structurally Stable Flows}.
\newblock {\em Journal of Differential Equations}, 140(2):238--265, 1997.

\bibitem{RepolhoCagliari2021}
L.~V. {Repolho Cagliari}, J.~E. Hicken, and S.~Mishra.
\newblock {Using the LSS adjoint for simultaneous plant and control
  optimization of chaotic dynamical systems}.
\newblock {\em Structural and Multidisciplinary Optimization},
  64(4):2361--2376, 2021.

\bibitem{srbmap}
D.~Ruelle.
\newblock {A Measure Associated with Axiom-A Attractors}.
\newblock {\em American Journal of Mathematics}, 98(3):619, 1976.

\bibitem{Ruelle_diff_maps}
D.~Ruelle.
\newblock {Differentiation of SRB States}.
\newblock {\em Commun. Math. Phys}, 187:227--241, 1997.

\bibitem{Ruelle_diff_maps_erratum}
D.~Ruelle.
\newblock {Differentiation of SRB States: Correction and Complements}.
\newblock {\em Communications in Mathematical Physics}, 234:185--190, 2003.

\bibitem{Ruelle_diff_flow}
D.~Ruelle.
\newblock {Differentiation of SRB states for hyperbolic flows}.
\newblock {\em Ergodic Theory and Dynamical Systems}, 28(02):613--631, 2008.

\bibitem{Shawki2019}
K.~Shawki and G.~Papadakis.
\newblock {A preconditioned multiple shooting shadowing algorithm for the
  sensitivity analysis of chaotic systems}.
\newblock {\em Journal of Computational Physics}, 398:108861, 2019.

\bibitem{Shimizu2018}
Y.~S. Shimizu and K.~J. Fidkowski.
\newblock {Output-based error estimation for chaotic flows using reduced-order
  modeling}.
\newblock In {\em AIAA Aerospace Sciences Meeting}, Kissimmee, Florida, 2018.

\bibitem{Sinai1972}
Y.~G. Sinai.
\newblock {Gibbs Measures in Ergodic Theory}.
\newblock {\em Russian Mathematical Surveys}, 27(4):21--69, 1972.

\bibitem{wang2014convergence}
Q.~Wang.
\newblock {Convergence of the Least Squares Shadowing Method for Computing
  Derivative of Ergodic Averages}.
\newblock {\em SIAM Journal on Numerical Analysis}, 52(1):156--170, 2014.

\bibitem{young2002srb}
L.-S. Young.
\newblock {What are SRB measures, and which dynamical systems have them?}
\newblock {\em Journal of Statistical Physics}, 108(5):733--754, 2002.

\end{thebibliography}

\end{document}